\documentclass[11pt]{amsart}
\usepackage{amsmath, amssymb, amsthm}
\usepackage{hyperref}
\usepackage{tikz}
\usepackage{subfigure}
\usepackage{ytableau}
\usepackage{array}
\usepackage{amsfonts}
\usepackage{mathtools}

\theoremstyle{plain}
\newtheorem{theorem}{Theorem}[section]
\newtheorem{lemma}[theorem]{Lemma}

\newtheorem{proposition}[theorem]{Proposition}
\newtheorem{conjecture}[theorem]{Conjecture}

\newtheorem{remark}[]{Remark}
\newtheorem{claim}[]{Claim}

\numberwithin{equation}{section}

\begin{document}
	\title[Zeros in The Character Table of The Symmetric Group
	]{Lower Bound for The Number of Zeros in The Character Table of The Symmetric Group} 
	\author{JAYANTA BARMAN}
	\address{JAYANTA BARMAN\\ Department of Mathematics \\
		Indian Institute of Technology Kharagpur \\
		Kharagpur-721302, India.} 
	\email{b1999jayanta@gmail.com}
	
	\author[Kamalakshya Mahatab]{Kamalakshya Mahatab$^\dagger$}
	\address{Kamalakshya Mahatab\\ Department of Mathematics \\
		Indian Institute of Technology Kharagpur \\
		Kharagpur-721302, India.} 
	\email{kamalakshya@maths.iitkgp.ac.in}
	
	\thanks{$^\dagger$Corresponding author: \texttt{kamalakshya@maths.iitkgp.ac.in}}
	
	\subjclass[2020]{20C30, 11P82, 05A17}
	\keywords{Zeros in Character Table, Symmetric Groups, Partition Function, t-core Partitions}
	
	\begin{abstract}
		For any two partitions $\lambda$ and $\mu$ of a positive integer $N$, let $\chi_{\lambda}(\mu)$ be the value of the irreducible character of the symmetric group $S_{N}$ associated with $\lambda$, evaluated at the conjugacy class of elements whose cycle type is determined by $\mu$. Let $Z(N)$ be the number of zeros in the character table of $S_N$, and $Z_{t}(N)$ be defined as  
		$$
		Z_{t}(N):= \#\{(\lambda,\mu): \chi_{\lambda}(\mu) = 0 \; \text{with $\lambda$ a $t$-core}\}.
		$$  
		We prove
		$$
		Z(N) \ge \frac{2\, p(N)^{2}}{\log N} \left(1+O\left(\frac{1}{\sqrt{\log N}}\right)\right),
		$$
		where $p(N)$ denotes the number of partitions of $N$. We also give explicit lower bounds for $Z_t(N)$ in various ranges of $t$. 
	\end{abstract}
	
	\maketitle
	\section{Introduction} 
	For any two partitions $\lambda$ and $\mu$ of a positive integer $N$, let $\chi_{\lambda}(\mu)$ denote the value of the irreducible character of the symmetric group $S_{N}$ associated with $\lambda$, evaluated in the conjugacy class of elements whose cycle type is determined by $\mu$. By the Murnaghan-Nakayama rule \cite{fulton1991j}, it is known that irreducible characters are integer-valued functions, and the number of irreducible characters of $S_{N}$ is equal to $p(N)$, the number of partitions of $N$. In 1918, Hardy and Ramanujan \cite{hardy1918asymptotic} used the circle method to prove the celebrated asymptotic formula for $p(N)$: 
	\begin{align*}
		p(N) \sim \frac{1}{4N\sqrt{3}} \exp\bigg(\frac{2\pi}{\sqrt{6}} \sqrt{N} \bigg).
	\end{align*}
	A more usable and explicit version of the above result, proved by  Rademacher \cite{rademacher1938partition}, gives:
	\begin{align}\label{eq-hardy}
		p(N) = \frac{1}{4N\sqrt{3}} \exp\bigg(\frac{2\pi}{\sqrt{6}} \sqrt{N} \bigg) \bigg(1+O(N^{-\frac{1}{2}})\bigg),
	\end{align}
	which we will use frequently.
	In this article, we study the zeros of the character values. Although linear characters never take the value zero, Burnside's classical result \cite{burnside1904arithmetical} establishes that every non-linear irreducible character must vanish at some group element. A statistical estimate proved by Miller \cite{miller2014probability} states that if one chooses an irreducible character of $S_{N}$ uniformly at random and selects a random element from $S_{N}$ uniformly, then the probability that the character value is zero approaches $1$ as $N \to \infty$. However, this result does not estimate the number of zeros in the character table of $S_N$ since the character values are distributed over the conjugacy classes, rather than individual elements of $S_N$.  Note that the character table of $S_N$ has $p(N)^{2}$ entries. Let $Z(N)$ be the number of zeros in the character table of the symmetric group $S_N$. 
	Miller \cite{miller2014probability, miller2019parity} introduced the problem of determining the asymptotic behavior of $Z(N)$. Due to the rapid growth of $p(N)$, computation of $Z(N)$ is challenging. To tackle this problem, Miller and Scheinerman \cite{miller2025large} conducted a large-scale Monte Carlo simulation to determine the density of zeros for large values of $N$, leading to the following conjecture:
	\begin{conjecture}\label{conjec}
		$\frac{Z(N)}{p(N)^{2}} \sim  \frac{2}{\log N}$ as $N \to \infty$.
	\end{conjecture}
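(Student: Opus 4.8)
My plan is to separate the ``structural'' zeros that the Murnaghan--Nakayama rule produces for free from the ``accidental'' zeros created by cancellation, and to show that the former already account for the full main term while the latter are negligible. The starting point is that if $\mu$ has a part equal to $t$ and $\lambda$ is a $t$-core --- equivalently, $t$ is not a hook length of $\lambda$ --- then removing the part $t$ first in the Murnaghan--Nakayama recursion leaves an empty signed sum, so $\chi_\lambda(\mu)=0$. Writing $H(\lambda)$ for the set of hook lengths of $\lambda$, and recalling that $\lambda$ is a $t$-core iff $t\notin H(\lambda)$ iff no hook length of $\lambda$ is divisible by $t$, this gives the inclusion of events $\{\mathrm{parts}(\mu)\not\subseteq H(\lambda)\}\subseteq\{\chi_\lambda(\mu)=0\}$. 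Setting $A_h:=\{\lambda:\,h\notin H(\lambda)\}\times\{\mu:\,h\in\mathrm{parts}(\mu)\}$, so that $\bigcup_h A_h=\{\mathrm{parts}(\mu)\not\subseteq H(\lambda)\}$, I obtain the exact decomposition
\[
Z(N)=\Big|\bigcup_{h=1}^{N}A_h\Big|+\#\{(\lambda,\mu):\chi_\lambda(\mu)=0,\ \mathrm{parts}(\mu)\subseteq H(\lambda)\}.
\]
The Conjecture then splits into two tasks: (a) evaluate $|\bigcup_h A_h|$ and show it equals $\tfrac{2}{\log N}p(N)^2(1+o(1))$ (this already yields the lower bound of the abstract), and (b) show the accidental-zero term is $o\!\big(p(N)^2/\log N\big)$, which upgrades the lower bound to the asymptotic equality.

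\textbf{The main term.} For (a) the two factors of each $A_h$ decouple and are individually exact: the number of $\mu\vdash N$ with a part equal to $h$ is $p(N-h)$ (remove one copy of $h$), and the number of $\lambda\vdash N$ with $h\notin H(\lambda)$ is $c_h(N)$, the number of $h$-cores of $N$, with generating function $\prod_{n\ge1}(1-q^{hn})^{h}/(1-q^n)$. Hence $|A_h|=c_h(N)\,p(N-h)$, and the first-moment quantity is
\[
S:=\frac{1}{p(N)^2}\sum_{h=1}^{N}c_h(N)\,p(N-h).
\]
By Rademacher's formula \eqref{eq-hardy}, $p(N-h)/p(N)=e^{-\beta h}(1+o(1))$ with $\beta=\pi/\sqrt{6N}$, while a saddle-point analysis of $\prod_n(1-q^{hn})^{h}/(1-q^n)$ shows that $c_h(N)/p(N)$ rises from $0$ to $1$ as $\beta h$ crosses $\tfrac12\log N$; concretely, the expected number of cells of $\lambda$ with hook length $h$ is $h/(e^{\beta h}-1)$, so $c_h(N)/p(N)\approx\exp\!\big(-h/(e^{\beta h}-1)\big)$. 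Feeding these into $S$ and replacing the sum by $\tfrac1\beta\int_0^\infty\exp\!\big(-a-\tfrac1\beta\,a/(e^{a}-1)\big)\,da$ via $a=\beta h$, the inner exponential acts as a sharp cutoff at $a^\ast$ solving $a^\ast e^{-a^\ast}=\beta$, i.e. $a^\ast=\tfrac12\log N+O(\log\log N)$; the remaining integral $\int_{a^\ast}^\infty e^{-a}\,da=e^{-a^\ast}=\beta/a^\ast$ gives $S\sim 1/a^\ast\sim 2/\log N$. The factor $2$ is the reciprocal of the $\tfrac12$ coming jointly from $p(N-h)\sim e^{-\beta h}p(N)$ and the $\sqrt N$ in $\beta$. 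To pass from the first moment $S$ to the size of the union I would use Bonferroni: the union bound gives $|\bigcup_h A_h|\le S\,p(N)^2$, and truncated inclusion--exclusion gives $|\bigcup_h A_h|\ge S\,p(N)^2-\sum_{h<h'}|A_h\cap A_{h'}|$, where $|A_h\cap A_{h'}|=c_{h,h'}(N)\,p(N-h-h')$ and $c_{h,h'}(N)$ counts partitions with no hook of length $h$ or $h'$. The technical work is to bound these simultaneous-core counts and show $\sum_{h<h'}|A_h\cap A_{h'}|=o\!\big(S\,p(N)^2\big)$, i.e. that the pair correlations are of lower order; this pins $|\bigcup_h A_h|=\tfrac{2}{\log N}p(N)^2(1+o(1))$ and completes (a).

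\textbf{The accidental zeros --- the main obstacle.} Part (b) is where the genuine difficulty lies, and it is the reason the statement is still only a conjecture. One must show that among the pairs with $\mathrm{parts}(\mu)\subseteq H(\lambda)$ --- where the recursion does \emph{not} terminate trivially --- the proportion for which the signed Murnaghan--Nakayama sum nonetheless cancels to $0$ is $o(1/\log N)$. There is no known criterion for the vanishing of an individual $\chi_\lambda(\mu)$ in this regime, and the natural tool points the wrong way: the column-orthogonality relation $\sum_\lambda\chi_\lambda(\mu)^2=z_\mu$ bounds the number of \emph{non}-vanishing entries from above (each nonzero integer contributes at least $1$), hence bounds $Z(N)$ from \emph{below}, the direction already handled by (a). To make progress on (b) I would instead try to exhibit, for a set of density $1-o(1/\log N)$ among the pairs with $\mathrm{parts}(\mu)\subseteq H(\lambda)$, an explicit reason for nonvanishing: for instance, controlling the leading term of the Murnaghan--Nakayama expansion when $\mu$ has many small parts (the extreme case $\mu=(1^N)$ gives $\chi_\lambda(\mu)=f^\lambda\ne0$), or invoking quantitative lower bounds on typical character values. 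I expect this step to be the crux and to require an input genuinely beyond the combinatorics of hook lengths: the $t$-core method of (a), together with the reduction above, delivers the matching lower bound unconditionally, but the asymptotic upper bound on $Z(N)$ --- equivalently, the negligibility of accidental zeros --- is the substantive open part of the Conjecture.
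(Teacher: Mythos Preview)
The statement you are attempting is Conjecture~\ref{conjec}, and the paper does \emph{not} prove it; it proves only the lower bound, recorded separately as Theorem~\ref{theorem-1.2}. You recognise this yourself in your discussion of part~(b): the negligibility of ``accidental'' zeros is exactly the missing direction, and nothing in the paper addresses it. So your part~(b) is not a gap in your argument so much as a correct diagnosis that the conjecture remains open.

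For part~(a), the lower bound, your approach is sound in spirit but differs from the paper's in one structural respect that is worth noting. You index your events $A_h$ by \emph{any} part $h$ of $\mu$, so the $A_h$ overlap and you are forced into Bonferroni and hence into bounding simultaneous $(h,h')$-core counts $c_{h,h'}(N)$. The paper instead indexes by the \emph{largest} part $t$ of $\mu$: the sets $\{\mu:\max(\mu)=t\}$ are disjoint, each has size $p_t(N-t)$, and one obtains immediately
\[
Z(N)\;\ge\;\sum_{t=1}^{N}c_t(N)\,p_t(N-t)
\]
with no inclusion--exclusion at all. Since $p_t(N-t)=p(N-t)(1+o(1))$ in the dominant range $t\sim \tfrac{\sqrt{6}}{2\pi}\sqrt{N}\log N$ by Erd\H{o}s--Lehner (Lemma~\ref{lemma-3.4}), the two first moments agree asymptotically, but the paper's route avoids the simultaneous-core step you flag as ``technical work''. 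Your heuristic $c_h(N)/p(N)\approx\exp\!\big(-h/(e^{\beta h}-1)\big)$ and the Laplace-type evaluation of the resulting integral are on target; the paper makes precisely this rigorous by invoking Tyler's uniform asymptotics for $c_t(N)$ (Theorem~\ref{TylerThm} and Proposition~\ref{proposition-3.2}(iv)) together with Erd\H{o}s--Lehner, and then splits the sum at $T_1,T_2\sim\tfrac{\sqrt{6}}{2\pi}\sqrt{N}\log N$ to extract the constant $2$ via Claims~\ref{claim-1} and~\ref{claim-2}. If you were to carry your programme through, you would need exactly these inputs, plus an upper bound on the second Bonferroni sum; the paper's disjoint decomposition is the cleaner way to the same constant.
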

	Peluse \cite{peluse2020even} proved that the proportion of zeros in the character table of $S_N$ is at least $A / \log N$ for some positive constant $A$. 
	Later, in \cite{peluse2025divisibility}, Peluse and Soundararajan remarked that $Z(N) \geq \frac{2 p(N)^2}{\log N}$, without any details.
	In this article, we prove the above lower bound.
	\begin{theorem}\label{theorem-1.2} Let $N$ be a large positive integer. Then
		$$
		Z(N) \ge \frac{2 p(N)^{2}}{\log N}  \left(1+O\left(\frac{1}{\sqrt{\log N}}\right)\right).
		$$ 
	\end{theorem}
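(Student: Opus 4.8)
The plan is to produce many zeros by exploiting the Murnaghan--Nakayama rule together with the theory of $t$-core partitions. The key classical fact is that if $\lambda$ is a $t$-core partition of $N$ and $\mu$ is any partition of $N$ that contains a part divisible by $t$, then $\chi_\lambda(\mu) = 0$: the Murnaghan--Nakayama rule requires us to remove a rim hook of size $t$ (or a larger multiple, handled inductively), but a $t$-core by definition has no removable rim hook of size $t$, so the character value vanishes. Thus
$$
Z_t(N) \;\ge\; c_t(N)\cdot \#\{\mu \vdash N : \mu \text{ has a part divisible by } t\},
$$
where $c_t(N)$ is the number of $t$-core partitions of $N$. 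Summing these contributions over a suitable family of moduli $t$ — chosen so that the resulting pairs $(\lambda,\mu)$ are not double-counted — and comparing with $p(N)^2$ will give the bound. The factor $2/\log N$ should emerge from a careful optimization of which $t$'s to include.

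First I would fix the range of $t$. Take $t$ ranging over primes (or prime powers) in a window like $[y, 2y]$ for an appropriate $y$ growing with $N$; restricting to primes guarantees that a partition $\mu$ can have parts divisible by at most one such $t$ unless $N$ is large enough to accommodate two, so the pairs $(\lambda,\mu)$ counted for distinct $t$ are essentially disjoint, avoiding over-counting. Second, I would estimate $\#\{\mu \vdash N : t \mid \text{some part of } \mu\}$. Its complement is the number of partitions of $N$ into parts not divisible by $t$; for $t$ of size around $\sqrt{N}/\log N$ or a bit larger, a generating-function / saddle-point estimate shows this complement is $o(p(N))$, in fact $p(N)(1 - \Theta(\sqrt{N}/t) + \cdots)$, so almost all $\mu$ qualify. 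Third, I would estimate $c_t(N)$, the number of $t$-cores of $N$. For $t$ in the relevant range one has good asymptotics (e.g. from the $t$-core generating function $\prod (1-q^{tn})^t/(1-q^n)$), giving $c_t(N)$ comparable to $p(N)/t^{?}$ up to controllable factors; what actually matters is the \emph{sum} $\sum_t c_t(N)$ over the chosen window, which I would show is $\ge \frac{2p(N)}{\log N}(1+o(1))$ after optimizing the window. Combining: $Z(N) \ge \sum_t Z_t(N) \ge (1-o(1))p(N)\sum_t c_t(N) \ge \frac{2p(N)^2}{\log N}(1+o(1))$.

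The main obstacle is the simultaneous optimization in the third step: one needs the window of moduli $t$ to be wide enough that $\sum_t c_t(N)$ reaches the constant $2$ (not merely some positive constant, as in Peluse's earlier bound), yet narrow enough — and concentrated on primes — that the double-counting across different $t$, and the loss from $\mu$ having no part divisible by $t$, are both negligible. Quantifying the error as $O(1/\sqrt{\log N})$ will require an effective version of the asymptotics for $c_t(N)$ (uniform in $t$ over the window) and an effective sieve-type estimate for partitions avoiding multiples of $t$; the cleanest route is probably to take $t$ prime in a dyadic range around $\exp(\sqrt{\log N})$ or similar and track constants through Rademacher's formula \eqref{eq-hardy}. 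A secondary technical point is ruling out the rare pairs $(\lambda,\mu)$ where $\lambda$ is simultaneously a $t_1$-core and a $t_2$-core for two moduli in the window; since a partition of $N$ cannot be a $t$-core for $t$ much larger than $\sqrt{2N}$ and our $t$'s are smaller, this overlap is small and can be absorbed into the error term.
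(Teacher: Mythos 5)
Your overall skeleton is the same as the paper's: by Lemma~\ref{lemma-2.2} one gets $Z(N)\ge\sum_t c_t(N)\cdot\#\{\mu\}$ for a suitable family of $\mu$'s attached to each $t$, and the paper makes the families disjoint for free by taking $\mu$ with largest part \emph{exactly} $t$ (so $\#\{\mu\}=p_t(N-t)$ and one sums over all $1\le t\le N$ with no primality restriction and no overlap analysis). The genuine gap is in your quantitative plan. You want a window of $t$ in which simultaneously (a) almost every $\mu\vdash N$ has a part divisible by $t$, and (b) $\sum_t c_t(N)\ge \frac{2p(N)}{\log N}$. No such window exists: these two requirements live in disjoint regimes. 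For (a) one needs $t=o(\sqrt N)$ (the number of $\mu$ with a part divisible by $t$ is $\approx p(N)(1-e^{-c\sqrt N/t})$ for small $t$, but drops to $\approx p(N-t)\approx p(N)e^{-\pi t/\sqrt{6N}}$ once $t\gg\sqrt N$); but for $t\le\sqrt N$ the count $c_t(N)$ is only $\exp(O(\sqrt N))$ with a constant strictly smaller than $2\pi/\sqrt6$ (for fixed or slowly growing $t$ it is merely polynomial in $N$, cf.\ Proposition~\ref{proposition-3.2}(i)), so $\sum_{t\le\sqrt N}c_t(N)$ is exponentially smaller than $p(N)/\log N$. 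Your proposed windows $t\asymp\sqrt N/\log N$ and $t\asymp\exp(\sqrt{\log N})$ are therefore hopeless, and restricting further to primes in a dyadic range only shrinks the sum. (Your side remark that a partition of $N$ cannot be a $t$-core for $t\gg\sqrt{2N}$ is also false --- every partition of $N$ is an $(N+1)$-core, and $c_t(N)\to p(N)$ as $t$ grows --- which signals the same misplaced intuition about where $c_t(N)$ becomes large.)

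The actual main term comes from $t\asymp\frac{\sqrt6}{2\pi}\sqrt N\log N$, where \emph{neither} factor is close to $p(N)$: by Tyler's uniform asymptotic, $c_t(N)\approx p(N)\exp\left(-t e^{-\pi t/\sqrt{6N}}\right)$, while by Erd\H{o}s--Lehner $p_t(N-t)\approx p(N)e^{-\pi t/\sqrt{6N}}$, so each summand is $p(N)^2\,u\,e^{-tu}$ with $u=e^{-\pi t/\sqrt{6N}}$. Summing over $t$ and evaluating by a Laplace-type argument (the paper splits at $T_1<T_2\approx\frac{\sqrt6}{2\pi}\sqrt N\log N(1+o(1))$ and gets contributions $\frac{2}{e}\cdot\frac{p(N)^2}{\log N}$ and $(2-\frac{2}{e})\frac{p(N)^2}{\log N}$) produces the constant $2$. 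To carry this out you need asymptotics for $c_t(N)$ that are uniform as $t$ and $N$ grow together in this range, and an effective Erd\H{o}s--Lehner estimate for $p_t(N-t)$; your proposal does not identify either ingredient, and without relocating the window to $t\asymp\sqrt N\log N$ the argument cannot reach $\frac{2p(N)^2}{\log N}$.
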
		
	If Conjecture \ref{conjec} is true, then the main term of our lower bound is optimal.

	Our proof uses the following inequality (see Theorem~\ref{theorem-2.1}) based on the Murnaghan–Nakayama rule:
	\begin{align}
		Z(N) &\geq 
		\label{eq-(1.1)}
		\sum_{t=1}^{N} c_t(N)p_t(N-t),
	\end{align} 
	where $p_t(N)$ denotes the number of partitions of $N$ into parts of size at most $t$.
	Here, we attempt to obtain an exact order of the above sum. However, this requires asymptotic estimates for $c_t(N)$ and $p_t(N-t)$. We take the asymptotic formula for $c_t(N)$ from the recent work of Tyler \cite{tyler2026asymptotics} and the asymptotic formula for $p_t(N-t)$ from Erd\" os and Lehner \cite{erdos1941distribution}. We will see later in the proof that Tyler's formula plays an important role as it gives an asymptotic bound for $c_t(N)$ when $t$ and $N$ both vary. In the later part of the proof, we treat the above sum in different ranges of $t$ to obtain an optimal bound.
	
	We may also restrict our investigation to the number of zeros in a strip of the character table. In particular, we may consider only the rows where $\lambda$ is a $t$-core. Define
	$$
	Z_{t}(N) := \#\{(\lambda,\mu): \chi_{\lambda}(\mu) = 0 \; \text{with $\lambda$ a $t$-core}\}.
	$$	
	McSpirit and Ono \cite{mcspirit2023zeros} proved the following result for fixed primes $t \geq 5$:
	$$
	Z_{t}(N)\gg_t N^{\frac{t-5}{2}} \exp\left(\pi \sqrt{2N/3}\right), \ N\rightarrow \infty.
	$$

	We obtain the following lower bounds for $Z_{t}(N)$ as both $N, t \rightarrow \infty$. This gives an explicit version of McSpirit and Ono's result \cite{mcspirit2023zeros} for all $t$.
	\begin{theorem}\label{theorem-1.3} 
		Let $N\ge 100$ be a positive integer, and $t\leq N$. Then we have the following results:
		\newline
		(i) For all $0<\epsilon<1$, if $6\leq t \leq \frac{2\pi\sqrt{2N}}{\sqrt{(1+\epsilon)\log N}}$, then 
		$$
		Z_{t}(N) \geq R_{t}(N)p(N)\left(1+O\left(\frac{t}{\sqrt{N}}+t^{-\epsilon}\right)\right),
		$$ 
		where 
		$$
		R_{t}(N)=\frac{(4\pi e)^{\frac{t-1}{2}}(t-1)}{\sqrt{4\pi}(t^{2}-t)^{\frac{t}{2}}} \left(N+\frac{t^{2}-1}{24}\right)^{\frac{t-3}{2}}.
		$$
		(ii)  There exist $f(N)\sim \frac{\sqrt{24N}}{\sqrt{\frac{6}{\pi}-1}}$, such that if  
		$\frac{2\pi\sqrt{2N}}{\sqrt{\log N}}< t < f(N)$, then
		$$  
		Z_{t}(N)\ge Q_{t}(N)p(N)\left(1+O\left(\frac{t}{\sqrt{N}}\right)\right),
		$$  
		where 
		$$  
		Q_{t}(N)=\frac{2\sqrt{\pi}\exp\left(\frac{t-1}{2}-1.00873te^{-2\pi}\right)\left(\frac{\pi}{6}(24N+t^{2}-1)\right)^{\frac{t-3}{2}}}{t^{t-1}}.  
		$$
		(iii) For $t \ge f(N)$, where $f(N)$ is defined in part (ii), we have
		$$
		Z_{t}(N)\geq \frac{p(N)^{2}}{\exp\left(1.00873t\exp\left(-\frac{ t(t-1)}{2\left(N+\frac{t^{2}-1}{24}\right)}\right)+\frac{2\pi}{\sqrt{6}} \frac{t}{\sqrt{N-t}+\sqrt{N}}\right)} \left(1+O\left(\frac{t}{N}\right)\right).
		$$  
		
	\end{theorem}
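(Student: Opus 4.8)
The plan is to reduce all three estimates to one Murnaghan--Nakayama inequality and then feed in two asymptotic inputs: Tyler's formula for the number $c_t(N)$ of $t$-core partitions of $N$, and Rademacher's formula \eqref{eq-hardy} for $p(N)$. First I would record the basic vanishing criterion: if $\lambda\vdash N$ is a $t$-core then $\lambda$ has no hook of length divisible by $t$ (on the $t$-runner abacus every runner carries its beads pushed down, so no bead can be raised by a multiple of $t$), and hence, evaluating $\chi_\lambda(\mu)$ by the Murnaghan--Nakayama rule starting from \emph{any} part of $\mu$ that is divisible by $t$, one gets an empty sum, so $\chi_\lambda(\mu)=0$. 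Taking just those $\mu\vdash N$ that contain a part equal to $t$ --- there are exactly $p(N-t)$ of them --- gives the uniform inequality
\[
Z_t(N)\ \ge\ c_t(N)\,p(N-t),
\]
which is the $Z_t$-analogue of \eqref{eq-(1.1)}; no $p_t$-refinement is needed because $t$ is fixed and nothing is over-counted. Everything then reduces to estimating the two factors in the three ranges of $t$.

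For $p(N-t)$ I would apply \eqref{eq-hardy} to numerator and denominator, obtaining
\[
p(N-t)\ =\ \frac{p(N)}{\exp\!\big(\tfrac{2\pi}{\sqrt6}\,\tfrac{t}{\sqrt{N-t}+\sqrt N}\big)}\big(1+O(N^{-1/2})\big);
\]
in range (i), where $t=o(\sqrt N)$, this is $p(N)\big(1+O(t/\sqrt N)\big)$, while in ranges (ii)--(iii) the exponential factor is retained and is exactly the one appearing in part (iii). For $c_t(N)$ I would invoke Tyler's asymptotic formula, which comes from the circle method applied to $\sum_{M\ge0}c_t(M)x^M=\prod_{n\ge1}(1-x^n)^{-1}\prod_{n\ge1}(1-x^{tn})^t$ at the radius $x=e^{-s^*}$ with $s^*=\dfrac{t-1}{2\big(N+\frac{t^2-1}{24}\big)}$, so that $\big(N+\tfrac{t^2-1}{24}\big)s^*=\tfrac{t-1}{2}$. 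When $ts^*<2\pi$, i.e.\ $t<f(N)$, the Dedekind-$\eta$ transformation applies to \emph{both} Euler products, the exponentials $e^{\pm\pi^2/(6s^*)}$ cancel, and the main term is the Stirling form $R_t(N)$ of $\dfrac{(2\pi)^{(t-1)/2}}{t^{t/2}\,\Gamma((t-1)/2)}\big(N+\frac{t^2-1}{24}\big)^{(t-3)/2}$ when $t$ is small (range (i)), and, when $t\asymp\sqrt N$, a lower bound of the stated shape $Q_t(N)$ --- the $\exp(\tfrac{t-1}{2})$ being the saddle value $e^{(N+(t^2-1)/24)s^*}$ and the $\exp(-1.00873\,t\,e^{-2\pi})$ coming from bounding below the residual Euler product in the $\eta$-transformation of $\prod_n(1-e^{-tns^*})^t$, whose argument is at most $e^{-2\pi}$ once $ts^*\le2\pi$. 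Multiplying $c_t(N)$ by $p(N-t)$ and collecting errors --- the Tyler error supplies the $t^{-\epsilon}$ term in (i) under the stated cap $t\le 2\pi\sqrt{2N}/\sqrt{(1+\epsilon)\log N}$ --- gives parts (i) and (ii). The threshold $f(N)\sim\sqrt{24\pi N/(6-\pi)}$ is precisely the $t$ solving $ts^*=2\pi$, which is where this reasoning ceases to be valid.

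For part (iii), $t\ge f(N)$, the $\eta$-transformation is available only for the ``small'' factor $\prod_n(1-e^{-ns^*})^{-1}$ (legitimate since $s^*\to0$), while $\prod_n(1-e^{-tns^*})^t$, now with $ts^*\ge2\pi$, is bounded below directly:
\[
\prod_{n\ge1}(1-e^{-tns^*})^t\ =\ \exp\!\Big(-t\!\sum_{m\ge1}\tfrac{\sigma_1(m)}{m}e^{-mts^*}\Big)\ \ge\ \exp\!\big(-1.00873\,t\,e^{-ts^*}\big),
\]
using the explicit tail bound $\sum_{m\ge1}\tfrac{\sigma_1(m)}{m}e^{-m\theta}\le1.00873\,e^{-\theta}$ valid for $\theta\ge2\pi$. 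A circle-method (saddle-point) lower bound for the coefficient at radius $e^{-s^*}$, combined with the elementary inequality $\tfrac{\pi^2}{6s}+Ns\ge\pi\sqrt{2N/3}$ --- which forces what survives of the small factor to be at least of size $p(N)$ up to the $O(t/N)$ correction --- then yields $c_t(N)\ge p(N)\exp\!\big(-1.00873\,t\,e^{-ts^*}\big)\big(1+O(t/N)\big)$ with $ts^*=\tfrac{t(t-1)}{2(N+(t^2-1)/24)}$; multiplying by the formula above for $p(N-t)$ reproduces the bound of part (iii).

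The step I expect to be the real obstacle is the lower bound for $c_t(N)$, clean \emph{and} uniform in $t$ across the whole range $6\le t\le N$: where both $\eta$-transformations apply, Tyler's asymptotic is essentially off the shelf, but near and beyond $t=f(N)$ one must extract a genuine lower bound from the circle-method integral --- which requires actually controlling, not merely estimating, the minor-arc contribution --- and one must propagate the errors from Tyler's expansion, from \eqref{eq-hardy}, and from the Euler-product tail bound uniformly enough that they collapse into the single error terms $O(t/\sqrt N+t^{-\epsilon})$, $O(t/\sqrt N)$, $O(t/N)$ of the three parts. Once these estimates are in hand, the remaining work --- the Murnaghan--Nakayama step, the substitutions, and the case split at $t\asymp\sqrt{N/\log N}$ and $t\asymp\sqrt N$ --- is routine bookkeeping.
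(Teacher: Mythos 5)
Your proposal takes essentially the same route as the paper: the same Murnaghan--Nakayama/Lemma~\ref{lemma-2.2} reduction $Z_t(N)\ge c_t(N)\,p(N-t)$, the same Rademacher estimate for $p(N-t)$, and the same treatment of $c_t(N)$ via Tyler's saddle-point asymptotics with the identical range split at $ty=1$ (your $ts^*=2\pi$), the same Stirling form in range (i), and the same constant $1.00873$ from the Euler-product tail. The one step you flag as the real obstacle --- a genuine lower bound for $c_t(N)$ uniformly for $t\ge f(N)$ --- is not an obstacle in the paper's argument, since Tyler's Theorem~\ref{TylerThm} is already stated for all $t\ge 6$ and $N\ge 0$ with error $O(1/\min(t,\sqrt{N}))$, so the paper only needs to simplify that formula with the $\eta$-bounds of Lemmas~\ref{lemma-2.3}--\ref{lemma-2.5} rather than redo any minor-arc analysis.
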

	We believe that the above bounds, in Theorem~\ref{theorem-1.2} and Theorem~\ref{theorem-1.3}, can be generalized to  Weyl groups and wreath products of symmetric groups \cite{dong2023almost}. 
	
	\section{Acknowledgments}
	We extend our heartfelt thanks to Prof. Ben Kane for his insightful and valuable suggestions. J. Barman is deeply thankful to the University Grants Commission (UGC), India, for their invaluable support through the Fellowship Programme. K. Mahatab is supported by the DST INSPIRE Faculty Award Program (grant no. DST/INSPIRE/04/2019/002586) and ARG-MATRICS program (grant no. ANRF/ARGM/2025/002540/MTR).
	\section{Preliminaries}
	The hook $h$ associated with a box $b$ in the Young diagram of a partition $\lambda$ includes the box $b$ itself, along with all the boxes located directly to the right of $b$ and those directly below $b$. The length of the hook $h$, denoted by $\ell(h)$, is the total number of boxes contained within the hook $h$. For example, in the Young diagram of $\lambda = (4,2,1)$ shown below, each box is labeled with its corresponding hook length.
	$$
	\ytableausetup{centertableaux}
	\begin{ytableau}
		6 &  4 & 2 & 1
		\\
		3 & 1 \\
		1\\
	\end{ytableau}
	$$
	$$
	\text{Figure 1.  Hook-lengths for }\lambda=(4,2,1)
	$$
	The height of a hook $h$, denoted by $\text{ht}(h)$, is defined as one less than the total number of rows in the Young diagram of $\lambda$ that contain a box belonging to $h$. Each hook is associated with a border strip (also called a skew hook), denoted by $\text{bs}(h)$, which is the continuous boundary region of the Young diagram extending from the rightmost box of $h$ to its bottommost box. Removing this border strip yields a smaller Young diagram.
	\par
	A partition is called a $t$-core if none of the hook lengths in its Young diagram are divisible by $t$. For example, as illustrated in Figure 1, the partition $(4, 2, 1)$ is a $5$-core.
	
	We now recall the Murnaghan–Nakayama rule, a classical result used to compute the character values of irreducible representations of the symmetric group $S_N$.
	\begin{theorem}[The Murnaghan-Nakayama rule]\label{theorem-2.1}
		Let $N$ and $t$ be positive integers such that $t \leq N$. Consider $\sigma \in S_{N}$, expressed as $\sigma = \tau \cdot \rho$, where $\rho$ is a $t$-cycle, and $\tau$ is a permutation in $S_{N}$ whose support is disjoint from that of $\rho$.  Then
		$$
		\chi_{\lambda}(\sigma)=\sum_{\substack{h \in \lambda \\ \ell(h)=t}}(-1)^{\mathrm{ht}(h)} \chi_{\lambda \backslash \mathrm{bs}(h)}(\tau) .
		$$  
	\end{theorem}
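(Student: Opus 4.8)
The plan is to deduce the rule from the theory of symmetric functions via the Frobenius characteristic, so as to avoid manipulating the representations directly. Let $\Lambda$ be the ring of symmetric functions with the Hall inner product $\langle\,\cdot\,,\,\cdot\,\rangle$, write $s_\lambda$ for the Schur functions and $p_\mu$ for the power sums, and take as known the orthogonality $\langle p_\mu,p_\nu\rangle=z_\mu\delta_{\mu\nu}$ together with the Frobenius formula $\langle s_\lambda,p_\mu\rangle=\chi_\lambda(\mu)$ (equivalently $p_\mu=\sum_{\lambda\vdash N}\chi_\lambda(\mu)s_\lambda$). Because $\sigma=\tau\rho$ with $\rho$ a $t$-cycle disjoint in support from $\tau$, the cycle type of $\sigma$ is $\mu=(t)\cup\nu$ with $\nu$ the cycle type of $\tau$, so $p_\mu=p_t\,p_\nu$ and
$$
\chi_\lambda(\sigma)=\langle s_\lambda,\,p_t\,p_\nu\rangle=\langle p_t^{\perp}s_\lambda,\,p_\nu\rangle ,
$$
where $p_t^{\perp}$ denotes the adjoint of multiplication by $p_t$. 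Thus the whole statement reduces to the single identity
$$
p_t^{\perp}s_\lambda=\sum_{\substack{h\in\lambda\\ \ell(h)=t}}(-1)^{\mathrm{ht}(h)}\,s_{\lambda\setminus\mathrm{bs}(h)} ,
$$
since pairing this with $p_\nu$ and applying the Frobenius formula to each term on the right produces exactly $\sum_{\ell(h)=t}(-1)^{\mathrm{ht}(h)}\chi_{\lambda\setminus\mathrm{bs}(h)}(\tau)$.

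To evaluate $p_t^{\perp}$ I would use that, under the identification $\Lambda\otimes\mathbb{Q}\cong\mathbb{Q}[p_1,p_2,\dots]$, the operator $p_t^{\perp}$ acts as the derivation $t\,\partial/\partial p_t$. Applying it to the generating identity $\sum_{m\ge0}h_m z^m=\exp\!\big(\sum_{r\ge1}p_r z^r/r\big)$ gives $\sum_m(p_t^{\perp}h_m)z^m=z^t\sum_{k\ge0}h_k z^k$, so $p_t^{\perp}h_m=h_{m-t}$ (with $h_0=1$ and $h_m=0$ for $m<0$). Now feed this into the Jacobi–Trudi expansion $s_\lambda=\det\!\big(h_{\lambda_i-i+j}\big)_{1\le i,j\le n}$ for any $n$ at least the number of parts of $\lambda$: since $p_t^{\perp}$ is a derivation, $p_t^{\perp}s_\lambda=\sum_{k=1}^{n}D_k$, where $D_k$ is this determinant with its $k$-th row replaced by $\big(h_{\lambda_k-k+j-t}\big)_{1\le j\le n}$.

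The core — and the step I expect to be the main obstacle — is to evaluate the $D_k$ and recognise the signs. Encode $\lambda$ by its beta-numbers $\beta_i=\lambda_i-i+n$, a strictly decreasing sequence; then $D_k$ is the Jacobi–Trudi determinant attached to the sequence obtained from $(\beta_1,\dots,\beta_n)$ by lowering $\beta_k$ to $\beta_k-t$. If $\beta_k-t<0$ that row is identically zero and $D_k=0$; if $\beta_k-t$ already occurs among the $\beta_i$, two rows coincide and again $D_k=0$. Otherwise, re-sorting the new sequence back into decreasing order costs the sign $(-1)^{r_k}$ with $r_k=\#\{\,j:\beta_k-t<\beta_j<\beta_k\,\}$ and returns $(-1)^{r_k}s_{\mu^{(k)}}$, where $\mu^{(k)}$ has beta-set $(\{\beta_i\}\setminus\{\beta_k\})\cup\{\beta_k-t\}$. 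It then remains to invoke the standard dictionary between beta-sets and border strips: the admissible lowerings $\beta_k\mapsto\beta_k-t$ correspond bijectively to the hooks $h\in\lambda$ with $\ell(h)=t$; under this correspondence $\mu^{(k)}=\lambda\setminus\mathrm{bs}(h)$, and $r_k$ — the number of beta-numbers strictly between $\beta_k-t$ and $\beta_k$ — equals the number of rows met by $\mathrm{bs}(h)$ minus one, i.e.\ $\mathrm{ht}(h)$. Substituting these identifications into $p_t^{\perp}s_\lambda=\sum_k D_k$ yields the required identity, and hence the theorem. Carefully checking this last dictionary, and that the re-sorting sign is exactly $(-1)^{\mathrm{ht}}$, is where the real work lies; everything else is formal.

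An alternative route is a direct representation-theoretic induction, peeling the $t$-cycle off one transposition at a time via the branching rule; this works as well, but there the sign $(-1)^{\mathrm{ht}(h)}$ emerges only after telescoping many intermediate signs, which I find less transparent than the single re-sorting sign above, so I would prefer the symmetric-function argument.
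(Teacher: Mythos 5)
The paper does not actually prove Theorem~\ref{theorem-2.1}: it is stated as the classical Murnaghan--Nakayama rule and cited to the literature (Fulton), so there is no in-paper argument to compare against. Your symmetric-function proof is a correct, standard route to the result (essentially Macdonald's or Stanley's proof): the reduction $\chi_\lambda(\sigma)=\langle p_t^{\perp}s_\lambda,p_\nu\rangle$ via the Frobenius characteristic is right; the identification $p_t^{\perp}=t\,\partial/\partial p_t$ follows from $\langle p_\mu,p_\nu\rangle=z_\mu\delta_{\mu\nu}$ and the multiplicity bookkeeping in $z_{\nu\cup(t)}$; the computation $p_t^{\perp}h_m=h_{m-t}$ from the generating function is correct; and since $p_t^{\perp}$ is a derivation, the row-by-row expansion of the Jacobi--Trudi determinant is the standard differentiation-of-a-determinant formula. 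Your handling of the degenerate cases is also right (in $D_k$ the largest index in the modified row is $\beta_k-t$, so the row vanishes entirely when $\beta_k-t<0$, and a repeated beta-number kills the determinant). The one load-bearing ingredient you quote rather than verify is the abacus dictionary: that admissible replacements $\beta_k\mapsto\beta_k-t$ biject with hooks of length $t$, that the resulting beta-set is that of $\lambda\setminus\mathrm{bs}(h)$, and that the re-sorting sign exponent $r_k=\#\{j:\beta_k-t<\beta_j<\beta_k\}$ equals $\mathrm{ht}(h)$. You correctly flag this as where the real work lies; it is a classical fact, so as a proof sketch your argument is complete and sound, but a fully self-contained write-up would need that lemma spelled out. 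Relative to the paper, which simply imports the rule as a black box, your approach buys a self-contained derivation at the cost of developing the $\Lambda$-machinery; it changes nothing downstream, since the paper only uses the rule through Lemma~\ref{lemma-2.2} and inequality~(\ref{eq-(1.1)}).
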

	The notation $\lambda \setminus \operatorname{bs}(h)$ refers to the partition of $N-t$ obtained by removing the border strip $\operatorname{bs}(h)$ from the Young diagram of $\lambda$. Additionally, $\chi_{\lambda \setminus \operatorname{bs}(h)}(\tau)$ denotes the character value of the irreducible representation of $S_{N-t}$ corresponding to the partition $\lambda \setminus \operatorname{bs}(h)$, evaluated at the conjugacy class of $\tau$. We may obtain the following result using the Murnaghan-Nakayama rule, which gives a sufficient condition for the character value to be zero.
	\begin{lemma}[{\cite[Lemma 2.2]{peluse2020even}}]\label{lemma-2.2}
		Let $\lambda$ and $\mu$ be two partitions of $N$. If $\mu$ has a part of size $t$  and $\lambda$ is a $t$-core, then $\chi_{\lambda}(\mu)=0.$
	\end{lemma}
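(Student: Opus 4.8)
The plan is to apply the Murnaghan–Nakayama rule (Theorem~\ref{theorem-2.1}) directly. First I would fix a permutation $\sigma \in S_N$ whose cycle type is $\mu$. Since $\mu$ has a part equal to $t$, I can write $\sigma = \tau \cdot \rho$, where $\rho$ is a $t$-cycle supported on the $t$ points corresponding to that part, and $\tau$ is the permutation built from the remaining cycles of $\sigma$. By construction the supports of $\tau$ and $\rho$ are disjoint, so the hypotheses of Theorem~\ref{theorem-2.1} are satisfied; since character values depend only on the conjugacy class, it is legitimate to use this particular representative $\sigma$.

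Next I would invoke the rule to obtain
$$
\chi_\lambda(\mu) = \chi_\lambda(\sigma) = \sum_{\substack{h \in \lambda \\ \ell(h) = t}} (-1)^{\mathrm{ht}(h)} \, \chi_{\lambda \setminus \mathrm{bs}(h)}(\tau).
$$
The final step is to observe that the indexing set of this sum is empty: by definition, $\lambda$ being a $t$-core means that no hook length in the Young diagram of $\lambda$ is divisible by $t$, and in particular no hook of $\lambda$ has length exactly $t$. Hence the right-hand side is the empty sum, which equals $0$, and therefore $\chi_\lambda(\mu) = 0$.

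There is essentially no obstacle here — the statement is an immediate corollary of the Murnaghan–Nakayama rule. The only point deserving a line of care is the reduction to the hypothesis of Theorem~\ref{theorem-2.1}, namely choosing the representative $\sigma$ with the $t$-cycle $\rho$ split off as a disjoint factor; everything else is a matter of unwinding the definition of a $t$-core.
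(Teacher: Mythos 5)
Your proof is correct and is exactly the argument the paper has in mind: it states that the lemma follows from the Murnaghan--Nakayama rule (citing Peluse for details), and your argument — split off the $t$-cycle, apply Theorem~\ref{theorem-2.1}, and note that a $t$-core has no hook of length $t$ so the sum is empty — is that standard derivation. No issues.
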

	
	If we consider the partitions $\mu$ having the largest part $t$ and the $t$-core partitions $\lambda$, then by the above lemma $\chi_{\lambda}(\mu)=0.$ This set of pairs $(\lambda, \mu)$ gives $c_t(N)p_t(N-t)$ zeros. We also notice that the set of partitions $\mu$ of $N$ having the largest part $t_1$ are disjoint from the set of partitions of $N$ having the largest part $t_2$. So the number of zeros in the character table is at least 
	\[\sum_{t=1}^{N} c_t(N)p_t(N-t),\]
	which gives (\ref{eq-(1.1)}).
	\newline
	Next, we simplify Tyler's formula for $c_t(N)$, which requires the following notations.

	The Dedekind eta function $\eta(z)$ is defined by 
	$$
	\eta(z)=\exp\left(\frac{\pi iz}{12}\right)\prod_{n=1}^{\infty}(1-\exp(2\pi inz)),
	$$
	where $z=x+iy$, $y>0$.
	In  \cite{tyler2026asymptotics}, Tyler defines the following functions:
	\begin{align*}
		&\mu_{k}(z)=-\frac{z^{k+1}}{2\pi i} \left(\frac{d}{dz}\right)^{k} \log\eta(z),\quad \text{for $k\ge 1$}\\
		&\text{and}\quad f_{t}(z)=\frac{\eta(tz)^{t}}{\eta(z)}.
	\end{align*}
	To approximate the Dedekind eta function for large $y$, we will use the following result.
	\begin{lemma}\label{lemma-2.3}
		For each $y\ge \frac{\sqrt{3}}{2}$, there exist a $v$ with $1<v<1.00873$ such that
		$$
		\eta(iy)=\exp\left(-\frac{\pi y}{12}-v e^{-2\pi y}\right).
		$$
	\end{lemma}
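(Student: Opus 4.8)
The plan is to work directly from the product definition of $\eta$ and to estimate the resulting series. Putting $z=iy$ in the definition gives $\exp(\pi i z/12)=\exp(-\pi y/12)$ and $\exp(2\pi i n z)=\exp(-2\pi n y)$, so with $q:=e^{-2\pi y}$, which lies in $(0,1)$ because $y>0$, we have $\eta(iy)=e^{-\pi y/12}\prod_{n\ge 1}(1-q^n)$. Each factor $1-q^n$ lies in $(0,1)$, so the product is a positive real number and I may define
\[ v:=-\frac1q\sum_{n\ge 1}\log(1-q^n)=\frac1q\sum_{n\ge 1}\sum_{m\ge 1}\frac{q^{nm}}{m}, \]
a convergent series of strictly positive terms. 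Then $\prod_{n\ge1}(1-q^n)=e^{-vq}$ and hence $\eta(iy)=\exp\!\big(-\tfrac{\pi y}{12}-v e^{-2\pi y}\big)$, which is the asserted form; everything now reduces to showing $1<v<1.00873$.

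For the lower bound, isolate the $(n,m)=(1,1)$ term of the double sum: $vq=\sum_{n,m\ge 1}q^{nm}/m>q$ since every other term is strictly positive, so $v>1$. (Equivalently, $-\log(1-q)=q+\tfrac12 q^2+\cdots>q$.)

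For the upper bound I would use the elementary inequality $-\log(1-x)<\dfrac{x}{1-x}$ valid for $0<x<1$ — immediate by integrating $\frac1{1-s}<\frac1{1-x}$ over $s\in(0,x)$ — applied to $x=q^n$, together with $1-q^n\ge 1-q$ for $n\ge 1$. This yields
\[ vq=-\sum_{n\ge 1}\log(1-q^n)<\sum_{n\ge 1}\frac{q^n}{1-q^n}\le\frac1{1-q}\sum_{n\ge 1}q^n=\frac{q}{(1-q)^2}, \]
so $v<(1-q)^{-2}$. Finally, $y\ge\sqrt3/2$ forces $q=e^{-2\pi y}\le e^{-\pi\sqrt3}$, and since $q\mapsto(1-q)^{-2}$ is increasing on $(0,1)$ we get $v<(1-e^{-\pi\sqrt3})^{-2}$; a direct numerical evaluation shows this quantity is $<1.00873$, which finishes the proof.

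I do not expect a genuine obstacle here; the only slightly delicate point is that the closing numerical inequality is tight, since $(1-e^{-\pi\sqrt3})^{-2}\approx 1.00872$, so the crude estimate $\sum_{n\ge1}q^n/(1-q^n)\le q/(1-q)^2$ is only barely sufficient at the endpoint $y=\sqrt3/2$. If more room were wanted, one could instead keep the $n=1$ term exactly and bound $\sum_{n\ge2}q^n/(1-q^n)\le q^2/\big((1-q)(1-q^2)\big)$, giving $v<\tfrac1{1-q}\big(1+\tfrac{q}{1-q^2}\big)$, which at the endpoint is about $1.0087$.
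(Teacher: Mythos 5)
Your proposal is correct and follows essentially the same route as the paper: take the logarithm of the product formula, observe that $-\log\prod_{n\ge1}(1-q^n)$ is a positive series bounded below by its leading term $q$ and above by $q/(1-q)^2$, and evaluate the latter at the endpoint $y=\sqrt{3}/2$ to get the constant $1.00873$. The only cosmetic difference is that the paper reaches the upper bound via the Lambert-series form $\sum_n \tfrac{\sigma(n)}{n}e^{-2\pi ny}<\sum_n n e^{-2\pi ny}$ (citing Tyler), whereas you derive the identical bound directly from $-\log(1-x)<x/(1-x)$.
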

	\begin{proof}
		From the definition of $\eta(z)$,
		$$
		\log \eta(iy)=-\frac{\pi y}{12}-\sum_{n=1}^{\infty}\frac{\sigma(n)}{n}\exp(-2\pi ny).
		$$
		Using the above formula and proceeding as in the proof of Lemma 3.2 of \cite{tyler2026asymptotics}, we have
		\begin{align}\label{eq-valpha}
			\exp(-2\pi y)<\sum_{n=1}^{\infty}\frac{\sigma(n)}{n}\exp(-2\pi ny)<\sum_{n=1}^{\infty}n\exp(-2\pi ny)=\frac{\exp(-2\pi y)}{\left(1-\exp(-2\pi y)\right)^{2}}.  
		\end{align}
		Now we obtain an upper bound of the above choosing $y=\frac{\sqrt{3}}{2}$ and write
		\begin{align*}
			\exp(-2\pi y)<\sum_{n=1}^{\infty}\frac{\sigma(n)}{n}\exp(-2\pi ny)<\frac{\exp(-2\pi y)}{\left(1-e^{-\sqrt{3}\pi}\right)^{2}}<1.00873e^{-2\pi y}.
		\end{align*}
		This proves our lemma.
	\end{proof}   
	For small $y>0$, we will use the functional equation for $\eta(z)$ as below. 
	\begin{lemma}\label{lemma-2.4}
		For each $0<y<1$, there exist a $v$ with $1<v<1.00873$ such that
		$$
		\eta(iy) = y^{-\frac{1}{2}} \exp\left(-\frac{\pi}{12y} - v e^{-\frac{2\pi}{y}} \right).
		$$
	\end{lemma}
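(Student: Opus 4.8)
The plan is to reduce this statement to Lemma~\ref{lemma-2.3} by invoking the modular transformation law for the Dedekind eta function, namely $\eta(-1/z) = \sqrt{-iz}\,\eta(z)$ for $z$ in the upper half-plane. First I would set $z = iy$ with $0 < y < 1$. Then $-1/z = i/y$, and since $\sqrt{-i\cdot iy} = \sqrt{y}$ (the principal branch, which is legitimate here because $y>0$), the functional equation gives
$$
\eta\!\left(\tfrac{i}{y}\right) = \sqrt{y}\,\eta(iy), \qquad \text{equivalently}\qquad \eta(iy) = y^{-\frac12}\,\eta\!\left(\tfrac{i}{y}\right).
$$

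Next, because $0 < y < 1$ forces $1/y > 1 > \frac{\sqrt 3}{2}$, I can apply Lemma~\ref{lemma-2.3} with $y$ replaced by $1/y$: there exists $v$ with $1 < v < 1.00873$ such that
$$
\eta\!\left(\tfrac{i}{y}\right) = \exp\!\left(-\frac{\pi}{12y} - v\,e^{-2\pi/y}\right).
$$
Substituting this into the identity from the previous step yields
$$
\eta(iy) = y^{-\frac12}\exp\!\left(-\frac{\pi}{12y} - v\,e^{-2\pi/y}\right),
$$
which is exactly the claimed formula, with the same constant $v$.

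There is essentially no analytic obstacle here; the only point requiring a little care is the choice of branch of the square root in the eta transformation formula, which I would justify by noting that $-iz = y > 0$ so $\sqrt{-iz}$ is the usual positive square root $\sqrt{y}$. Everything else is a direct substitution into Lemma~\ref{lemma-2.3}, so the proof is short.
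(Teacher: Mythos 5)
Your proof is correct and follows exactly the same route as the paper: apply the modular transformation $\eta(-1/z)=\sqrt{-iz}\,\eta(z)$ at $z=iy$ to get $\eta(iy)=y^{-1/2}\eta(i/y)$, then invoke Lemma~\ref{lemma-2.3} with $1/y>1>\frac{\sqrt{3}}{2}$. Your extra remark about the branch of the square root is a harmless (and welcome) clarification.
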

	\begin{proof}
		By the modular transformation formula,
		$$
		\eta\left(-\frac{1}{z}\right) = \sqrt{-iz}\, \eta(z),
		$$
		which holds for all $z$ in the upper half-plane. Applying this with $z = iy$, we obtain
		$$
		\eta(iy) = y^{-\frac{1}{2}}\, \eta\left( \frac{i}{y} \right).
		$$
		Using Lemma \ref{lemma-2.3}, we obtain our required result.
	\end{proof}
	
	\begin{lemma}[{\cite[Lemma 4.2]{tyler2026asymptotics}}]\label{lemma-2.5}
		Let $\mu_2$ and $0<y\le\frac{1}{10}$ be defined as before. Then for any positive integer $t>1$, the following inequalities hold:
		\newline
		$(i)$ For $ty<1$, we have
		$$  
		\frac{2\sqrt{\pi}}{\sqrt{y(t-1)}}<\frac{1}{\sqrt{\mu_{2}(iy)-\mu_{2}(ity)}}<\frac{2\sqrt{2\pi}}{\sqrt{y(t-1)}}.  
		$$
		$(ii)$ For $ty\ge 1$, we have
		$$
		\sqrt{12}<\frac{1}{\sqrt{\mu_{2}(iy)-\mu_{2}(ity)}}<\sqrt{16}.
		$$
	\end{lemma}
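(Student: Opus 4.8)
The plan is to compute $\mu_2(iy)$ explicitly in two complementary regimes and then assemble two-sided bounds for $\mu_2(iy)-\mu_2(ity)$, from which $(i)$ and $(ii)$ follow by taking reciprocal square roots. For the ``large argument'' regime I would differentiate $\log\eta(z)=\frac{\pi iz}{12}+\sum_{n\ge1}\log(1-e^{2\pi inz})$ twice and use $\sum_{n\ge1}\frac{nq^{n}}{1-q^{n}}=\sum_{N\ge1}\sigma(N)q^{N}$ to get $\left(\frac{d}{dz}\right)^{2}\log\eta(z)=4\pi^{2}\sum_{N\ge1}N\sigma(N)e^{2\pi iNz}$, hence
$$
\mu_2(iy)=2\pi y^{3}\sum_{N\ge1}N\sigma(N)e^{-2\pi Ny}>0,
$$
a series that converges rapidly once $y$ is bounded below. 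For the ``small argument'' regime I would instead use $\eta(-1/z)=\sqrt{-iz}\,\eta(z)$ (as in the proof of Lemma~\ref{lemma-2.4}), write $\log\eta(z)=\log\eta(-1/z)-\tfrac12\log(-iz)$, differentiate twice in $z$ by the chain rule, and set $z=ia$ (so $-1/z=i/a$), obtaining the exact identity
$$
\mu_2(ia)=\frac{1}{12}-\frac{a}{4\pi}+C(a),\qquad C(a):=\sum_{n\ge1}\sigma(n)\Bigl(\frac{2\pi n}{a}-2\Bigr)e^{-2\pi n/a}.
$$
For $0<a\le1$ every term of $C(a)$ is positive, so $C(a)>0$; termwise differentiation (the $n$-th summand has derivative a positive multiple of $2\pi n-3a>0$) shows $C$ is increasing on $(0,1)$; and a geometric-series estimate gives $C(a)\le\frac{2\pi K}{a}e^{-2\pi/a}$ with $K:=\sum_{n\ge1}n\sigma(n)e^{-2\pi(n-1)}<1.02$.

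For $(i)$, when $ty<1$ we have $y<ty<1$, and the second identity gives $\mu_2(iy)-\mu_2(ity)=\frac{(t-1)y}{4\pi}+\bigl(C(y)-C(ty)\bigr)$. Monotonicity of $C$ on $(0,1)$ forces $C(y)-C(ty)<0$, which yields $\mu_2(iy)-\mu_2(ity)<\frac{(t-1)y}{4\pi}$ and hence the lower bound in $(i)$. For the upper bound in $(i)$ I would use $C(y)>0$ together with the tail estimate: since $ty<1$ and $t\ge2$ (so $\frac{t}{t-1}\le2$), the inequality $C(ty)\le\frac{2\pi K}{ty}e^{-2\pi/(ty)}<\frac{(t-1)y}{8\pi}$ reduces, writing $s=ty$, to $\frac{32\pi^{2}K}{s^{2}}e^{-2\pi/s}<1$ for $s\in(0,1)$; since the left-hand side is increasing on $(0,1)$ and is about $0.6$ at $s=1$, this holds. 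Therefore $C(ty)-C(y)<C(ty)<\frac{(t-1)y}{8\pi}$, so $\mu_2(iy)-\mu_2(ity)>\frac{(t-1)y}{8\pi}$, completing $(i)$.

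For $(ii)$, when $ty\ge1$ (so $t\ge10$ since $y\le\frac1{10}$), the same identity and the estimate $0<C(y)<\frac{y}{4\pi}$ (again the geometric-series bound, comfortably valid for $y\le\frac1{10}$) give $\frac1{12}-\frac1{40\pi}<\mu_2(iy)<\frac1{12}$. For $\mu_2(ity)$ I would use the first series: pulling out a factor $e^{-2\pi ty}$ is legitimate since $ty\ge1$, and $u\mapsto u^{3}e^{-2\pi u}$ is decreasing on $[1,\infty)$ (its critical point $\tfrac{3}{2\pi}$ lies in $(0,1)$), so $0<\mu_2(ity)\le2\pi Ke^{-2\pi}<0.012$. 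Subtracting, $\frac1{16}<\mu_2(iy)-\mu_2(ity)<\frac1{12}$, and taking reciprocal square roots gives $\sqrt{12}<\frac1{\sqrt{\mu_2(iy)-\mu_2(ity)}}<\sqrt{16}$, which is $(ii)$.

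The only genuine difficulty is quantitative, not structural: the constants $4\pi,8\pi$ in $(i)$ and $12,16$ in $(ii)$ are essentially best possible, so the exponentially small pieces — $C(a)$ in both parts and $\mu_2(ity)$ in $(ii)$ — must be controlled sharply. In $(i)$ the inequality $C(ty)<\frac{(t-1)y}{8\pi}$ degenerates as $ty\to1^{-}$, and in $(ii)$ the difference $\mu_2(iy)-\mu_2(ity)$ clears $\tfrac1{16}$ only by about $10^{-3}$; both call for careful bookkeeping of the $\sigma$-weighted exponential tails, in exactly the style of the estimates already used for Lemmas~\ref{lemma-2.3} and \ref{lemma-2.4}. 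Everything else — the two expansions of $\mu_2(ia)$, the positivity and monotonicity of $C$ on $(0,1)$, and the single-variable inequalities — is routine.
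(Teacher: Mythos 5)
The paper never proves this lemma: it is imported verbatim from Tyler's paper (cited there as Lemma 4.2), so there is no in-paper proof to compare yours against. Taken on its own terms, your argument is correct and self-contained. I checked both expansions: differentiating $\log\eta$ twice does give $\mu_2(iy)=2\pi y^{3}\sum_{N\ge1}N\sigma(N)e^{-2\pi Ny}$, and the modular inversion gives the exact identity $\mu_2(ia)=\frac{1}{12}-\frac{a}{4\pi}+C(a)$ with your $C(a)$; positivity and monotonicity of $C$ on $(0,1)$ hold as you claim, since the derivative of the $n$-th summand is $\sigma(n)\frac{2\pi n}{a^{3}}e^{-2\pi n/a}(2\pi n-3a)>0$ there. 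The delicate numerical points also check out: $32\pi^{2}Ke^{-2\pi}\approx 0.60<1$ settles the upper bound in $(i)$, and in $(ii)$ the lower bound $\frac{1}{12}-\frac{1}{40\pi}-2\pi Ke^{-2\pi}\approx 0.0635$ does clear $\frac{1}{16}=0.0625$, narrowly, exactly as you flag. Your method --- $q$-expansion of $\eta$ for large imaginary part, modular inversion for small --- is the same toolkit this paper deploys for $\eta$ and $\mu_1$ in Lemmas \ref{lemma-2.3} and \ref{lemma-2.4} and in the proof of Proposition \ref{proposition-3.2}(iv), so it is very likely also how Tyler argues; in any case your write-up would make the lemma self-contained, which the paper does not attempt. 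The only things I would tighten are routine: justify the termwise differentiation of $C$ (uniform convergence on compact subsets of $(0,1)$) and record the one-line verification that $K=\sum_{n\ge1}n\sigma(n)e^{-2\pi(n-1)}\approx 1.0113<1.02$.
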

	Next, we recall the main theorem for $c_{t}(N)$ from Tyler's paper \cite{tyler2026asymptotics}.
	\begin{theorem}[{\cite[Theorem 1.4]{tyler2026asymptotics}}]\label{TylerThm}
		Assume that $N\ge 0$ and $t\ge 6.$  
		\newline
		(i) The equation
		\begin{align}\label{eq-Ty1}
			\frac{\mu_1(ity)-\mu_1(iy)}{y^{2}}=N+\frac{t^{2}-1}{24}  
		\end{align}
		admits a unique solution $y>0$, which satisfies
		\begin{align}\label{eq-Ty2}
			\frac{t-1}{4\pi\left(N+\frac{t^{2}-1}{24}\right)}<y<\frac{1}{\frac{3}{\pi}+\sqrt{24N-1+\frac{9}{\pi^{2}}}}.    
		\end{align}
		(ii) With this value of $y$, we have
		\begin{align}\label{eq-Ty3}
			c_{t}(N) = \frac{y^{\frac{3}{2}} \exp\left(2\pi y \left(N+\frac{t^{2}-1}{24}\right)\right) (\eta(ity))^{t}}
			{\sqrt{\mu_{2}(iy)-\mu_{2}(ity)}\,\eta(iy)} 
			\left(1+O\left(\frac{1}{\text{min}(t,\sqrt{N})}\right)\right).
		\end{align}
	\end{theorem}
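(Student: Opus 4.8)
The plan is to prove both parts by a saddle-point (circle-method) analysis of the generating function for $t$-cores. The starting point is the product identity
$$
f_t(z)=\frac{\eta(tz)^{t}}{\eta(z)}=q^{\frac{t^2-1}{24}}\sum_{N\ge 0}c_t(N)\,q^{N},\qquad q=e^{2\pi i z},
$$
which follows from the Euler products for $\eta(z)$ and $\eta(tz)$ together with the classical product formula $\sum_N c_t(N)q^N=\prod_{n\ge1}(1-q^{tn})^{t}/(1-q^{n})$. By Cauchy's theorem, for every $y>0$,
$$
c_t(N)=\int_{0}^{1} f_t(x+iy)\,e^{-2\pi i (x+iy)M}\,dx,\qquad M:=N+\frac{t^2-1}{24}.
$$
Writing the integrand as $e^{F(z)}$ with $F(z)=\log f_t(z)-2\pi i z M$ and using the definition of $\mu_1$, one computes $\tfrac{d}{dz}\log f_t(z)=-\tfrac{2\pi i}{z^{2}}\big(\mu_1(tz)-\mu_1(z)\big)$, so the saddle condition $F'(iy)=0$ is exactly \eqref{eq-Ty1}. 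This is how the equation in part (i) arises.

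For part (i), I would rewrite the saddle equation using $\log\eta(iy)=-\tfrac{\pi y}{12}-\sum_{n\ge1}\tfrac{\sigma(n)}{n}e^{-2\pi n y}$, which turns it into $\Psi(y)=N$, where $\Psi(y):=h(y)-t^{2}h(ty)$ with $h(y):=\sum_{n\ge1}\sigma(n)e^{-2\pi n y}$. Monotonicity, and hence uniqueness, would then follow cleanly from the identity $\mu_2(iy)-\mu_2(ity)=-y^{3}\Psi'(y)$, obtained by differentiating the series termwise: since Lemma~\ref{lemma-2.5} gives $\mu_2(iy)-\mu_2(ity)>0$, we get $\Psi'(y)<0$, so $\Psi$ decreases strictly from $+\infty$ (as $y\to0^{+}$) to $0$ (as $y\to\infty$). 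The two bounds in \eqref{eq-Ty2} would come from the quasimodular transformation of $E_2(iy)=1-24\,h(y)$. Applying it to both $h(y)$ and $h(ty)$ yields the exact identity
$$
\Psi(y)=\frac{t-1}{4\pi y}-\frac{t^2-1}{24}+\frac{1}{y^{2}}\sum_{m\ge1}\sigma(m)\big(e^{-2\pi m/(ty)}-e^{-2\pi m/y}\big),
$$
whose final sum is positive; evaluating the inequality $\Psi(y)>\tfrac{t-1}{4\pi y}-\tfrac{t^2-1}{24}$ at $y_0=\tfrac{t-1}{4\pi M}$ gives $\Psi(y_0)>N$, so by monotonicity the root exceeds $y_0$, the lower bound. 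For the upper bound I would discard the positive term $t^{2}h(ty)$ to get $h(y)\ge N$, use $E_2(i/y)<1$ to deduce $h(y)<\tfrac{1}{24y^{2}}-\tfrac{1}{4\pi y}+\tfrac1{24}$, and solve the resulting quadratic in $1/y$.

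For part (ii), I would carry out the Laplace approximation at $z=iy$. On the major arc $|x|\le\delta$ I would expand $F(iy+x)=F(iy)+\tfrac12 F''(iy)x^{2}+R(x)$, the linear term vanishing by the saddle condition and $R$ controlled by the third derivative, where $F''(iy)=\tfrac{d^2}{dz^2}\log f_t(iy)=-\tfrac{2\pi}{y^{3}}\big(\mu_2(iy)-\mu_2(ity)\big)<0$. The Gaussian integral then produces $\sqrt{2\pi/(-F''(iy))}=y^{3/2}/\sqrt{\mu_2(iy)-\mu_2(ity)}$, while $e^{F(iy)}=\tfrac{\eta(ity)^{t}}{\eta(iy)}e^{2\pi y M}$, and together these give exactly the main term of \eqref{eq-Ty3}. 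The remaining work is to bound the cubic Taylor remainder on the major arc and to show the minor arcs contribute exponentially less, tracking the dependence on both parameters to produce the error $O(1/\min(t,\sqrt N))$.

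The main obstacle will be this last uniform estimate. The factor $\eta(tz)^{t}$ creates secondary peaks of $|f_t|$ near the rationals $x=j/t$, and one must show that each is exponentially suppressed relative to the dominant saddle at $x=0$, \emph{uniformly} as $t$ and $N$ grow together; this, rather than the Gaussian main term, is where the delicate analysis—and the appearance of $\min(t,\sqrt N)$—resides. The bounds of Lemma~\ref{lemma-2.5} on $\mu_2(iy)-\mu_2(ity)$, which pin down the width of the saddle in both regimes $ty<1$ and $ty\ge1$, would be the key input for making these error estimates uniform.
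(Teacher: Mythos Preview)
The paper does not prove this theorem; it is quoted verbatim from \cite{tyler2026asymptotics} as Theorem~\ref{TylerThm} and used as a black box. So there is no proof in the present paper for your proposal to be compared against.

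That said, your outline is a faithful reconstruction of the saddle-point (circle-method) argument that underlies Tyler's result, and the computations you give are correct: the generating-function identity, the derivation of the saddle equation \eqref{eq-Ty1} from $F'(iy)=0$, the identification of $-F''(iy)$ with $\tfrac{2\pi}{y^{3}}(\mu_2(iy)-\mu_2(ity))$, and the assembly of the main term in \eqref{eq-Ty3} from the Gaussian integral are all right. One small caveat: you invoke Lemma~\ref{lemma-2.5} to conclude $\mu_2(iy)-\mu_2(ity)>0$ and hence $\Psi'(y)<0$ for \emph{all} $y>0$, but as stated in this paper that lemma only covers $0<y\le\tfrac{1}{10}$; a direct check (e.g.\ via the series $\Psi(y)=h(y)-t^{2}h(ty)$ and termwise differentiation) is needed to get monotonicity on the full half-line. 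You are also right that the genuine difficulty lies not in the main term but in the uniform minor-arc and cubic-remainder bounds, which is precisely the content of Tyler's paper and is not reproduced here.
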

	Now, we simplify Tyler's bound \cite{tyler2026asymptotics} for $c_{t}(N)$ in different ranges of $t$.
	\begin{proposition}\label{proposition-3.2} 
		Let $N\ge 100$ be a positive integer, and $t\leq N$. 
		\newline
		(i)  For all $0<\epsilon<1$, if $6\leq t \leq \frac{2\pi\sqrt{2N}}{\sqrt{(1+\epsilon)\log N}}$, then
		$$
		c_{t}(N)=\frac{(4\pi e)^{\frac{t-1}{2}}(t-1)}{\sqrt{4\pi}(t^{2}-t)^{\frac{t}{2}}} \left(N+\frac{t^{2}-1}
		{24}\right)^{\frac{t-3}{2}}(1+O(t^{-\epsilon})).
		$$
		(ii) There exists $f(N)\sim \frac{\sqrt{24N}}{\sqrt{\frac{6}{\pi}-1}}$ such that, if  
		$\frac{2\pi\sqrt{2N}}{\sqrt{\log N}}< t < f(N)$, then
		$$  
		c_{t}(N)\ge \frac{2\sqrt{\pi}\exp\left(\frac{t-1}{2}-1.00873te^{-2\pi}\right)\left(\frac{\pi}{6}(24N+t^{2}-1)\right)^{\frac{t-3}{2}}}{t^{t-1}} \left(1+O\left(t^{-1}\right)\right).  
		$$
		(iii) For $t \ge f(N)$, where $f(N)$ is defined in part (ii), we have
		\begin{align*}
			c_{t}(N)\ge p(N)\exp\left(-1.00873t\exp\left(-\frac{ t(t-1)}{2\left(N+\frac{t^{2}-1}{24}\right)}\right)\right)\left(1+O\left(N^{-\frac{1}{2}}\right)\right).
		\end{align*}
		\newline
		(iv) For $\frac{\sqrt{6}}{2\pi}\sqrt{N}\log N<t$ and $N\ge 300000$, we have
		\begin{equation*}
			c_{t}(N) \ge p(N) \exp\left(-t\exp\left(-\frac{\pi t}{\sqrt{6N}}\right)\right) \left(1+O\left(N^{-\frac{1}{2}}(\log N)^{4}\right)\right).
		\end{equation*}
	\end{proposition}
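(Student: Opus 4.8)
\emph{Strategy.} The plan is to feed Tyler's formula (Theorem~\ref{TylerThm}) into each of the four ranges, use Lemmas~\ref{lemma-2.3}--\ref{lemma-2.5} to make every factor explicit except the saddle value $y$ of~\eqref{eq-Ty1}, and then control the $y$-dependence by monotonicity rather than by solving that transcendental equation. Write $M:=N+\tfrac{t^2-1}{24}$, so $4\pi M=\tfrac{\pi}{6}(24N+t^2-1)$ and the exponential in Theorem~\ref{TylerThm} is $e^{2\pi yM}$. Since $y<1/\sqrt{24N}<1$ by~\eqref{eq-Ty2}, Lemma~\ref{lemma-2.4} always gives $1/\eta(iy)=y^{1/2}\exp\big(\tfrac{\pi}{12y}+v_2e^{-2\pi/y}\big)$ with $1<v_2<1.00873$, and the term $e^{-2\pi/y}$ (of size $e^{-c\sqrt N}$) is absorbed into errors. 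For $\eta(ity)^t$ one uses Lemma~\ref{lemma-2.4} when $ty<1$ (giving $(ty)^{-t/2}\exp\big(-\tfrac{\pi}{12y}-tv_1e^{-2\pi/(ty)}\big)$, so the $e^{\pm\pi/(12y)}$ factors from $\eta(ity)^t$ and $1/\eta(iy)$ cancel) and Lemma~\ref{lemma-2.3} when $ty\ge1$ (giving $\exp\big(-\tfrac{\pi t^2y}{12}-tv_1e^{-2\pi ty}\big)$, whereupon one uses the telescoping identity $2\pi yM-\tfrac{\pi t^2y}{12}+\tfrac{\pi}{12y}=2\pi yN-\tfrac{\pi y}{12}+\tfrac{\pi}{12y}$). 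Finally $1/\sqrt{\mu_2(iy)-\mu_2(ity)}$ is handled by Lemma~\ref{lemma-2.5}: the $ty<1$ bound $\tfrac{2\sqrt\pi}{\sqrt{y(t-1)}}$ in (i),(ii), and the $ty\ge1$ bound $\sqrt{12}=2\sqrt3$ in (iii),(iv). The threshold $f(N)$ is the positive root of $\tfrac{t(t-1)}{4\pi}=N+\tfrac{t^2-1}{24}$, at which the saddle has $ty\approx1$; this gives $f(N)\sim\sqrt{24N}/\sqrt{6/\pi-1}$, and~\eqref{eq-Ty2} shows $ty<1$ exactly for $t<f(N)$, which is why (i),(ii) split from (iii),(iv) at $f(N)$.

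\emph{The saddle-dependent exponential.} The one factor resisting an explicit treatment is a power of $y$ times $e^{2\pi yM}$ (resp.\ $e^{2\pi yN-\pi y/12+\pi/(12y)}$). The device is to avoid solving for $y$: for (i),(ii) the function $y\mapsto y^{(3-t)/2}e^{2\pi yM}$ has its unique minimum at $y=\tfrac{t-3}{4\pi M}$ and is increasing beyond, so $y>\tfrac{t-1}{4\pi M}>\tfrac{t-3}{4\pi M}$ yields $y^{(3-t)/2}e^{2\pi yM}\ge(t-1)^{(3-t)/2}(4\pi M)^{(t-3)/2}e^{(t-1)/2}$; collecting this with the $\eta$- and $\mu_2$-contributions and using $t^{-t/2}(t-1)^{(2-t)/2}=t^{-(t-1)}(1+O(1/t))$ produces $R_t(N)$ and $Q_t(N)$. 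For (iii),(iv) one instead uses that $y\mapsto 2\pi yN-\tfrac{\pi y}{12}+\tfrac{\pi}{12y}$ is decreasing on $(0,1/\sqrt{24N-1})$ and $y<1/\sqrt{24N}$, so $e^{2\pi yN-\pi y/12+\pi/(12y)}\ge e^{2\pi\sqrt{N/6}}(1+O(N^{-1/2}))$; together with $\sqrt{12}$ and $y^2\approx1/(24N)$ this reproduces $p(N)$ via~\eqref{eq-hardy}. For (i), where an asymptotic is claimed, one must additionally pin the saddle down: expanding $\mu_1,\mu_2$ for small argument via the Fourier expansion of $\log\eta$ (equivalently the transformation of $E_2$) gives $y=\tfrac{t-1}{4\pi M}(1+o(1))$ and $\mu_2(iy)-\mu_2(ity)=\tfrac{(t-1)y}{4\pi}(1+o(1))$, so the bound is an equality up to $O(1/t)\subseteq O(t^{-\epsilon})$; the hypothesis $t\le\tfrac{2\pi\sqrt{2N}}{\sqrt{(1+\epsilon)\log N}}$ ensures $ty<1$ and that $\exp(-tv_1e^{-2\pi/(ty)})$ is $1+O(t^{-\epsilon})$.

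\emph{Remaining bookkeeping and the main obstacle.} In (ii) the factor $\exp(-tv_1e^{-2\pi/(ty)})$ is no longer near $1$, but $ty<1$ and $v_1<1.00873$ give $\exp(-tv_1e^{-2\pi/(ty)})\ge\exp(-1.00873\,te^{-2\pi})$, the exponential in $Q_t(N)$. In (iii), $v_1<1.00873$ and $ty>\tfrac{t(t-1)}{4\pi M}$ (so $e^{-2\pi ty}<e^{-t(t-1)/(2M)}$) give $\exp(-tv_1e^{-2\pi ty})\ge\exp\big(-1.00873\,te^{-t(t-1)/(2M)}\big)$, which combined with the $p(N)$-identification finishes. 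Part (iv) is the delicate case: here $t\gtrsim\sqrt N\log N$, the threshold $\tfrac{\sqrt6}{2\pi}\sqrt N\log N$ being (essentially) where $t\,e^{-\pi t/\sqrt{6N}}$ drops below $1$; one exploits that the saddle stays comparable to $1/\sqrt{24N}$ to replace the crude bound on $e^{-2\pi ty}$ by one of the sharper size $e^{-\pi t/\sqrt{6N}}$ (equivalently one may work directly with $c_t(N)=p(N)-t\,p(N-t)+\cdots$ coming from $f_t(z)=\eta(tz)^t/\eta(z)$ together with~\eqref{eq-hardy}, the tail being non-negative for $t>N/2$ and controllable for smaller $t$ in the range); this is what costs the extra $(\log N)^4$, and $N\ge300000$ keeps the range non-trivial. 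The main obstacle throughout is precisely the absence of a closed form for the saddle $y$: the monotonicity observations are what make the lower bounds clean, and the small-$y$ expansions of $\mu_1,\mu_2$ are what upgrade (i) to an asymptotic.
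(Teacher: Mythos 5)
Your treatment of parts (ii) and (iii) is essentially the paper's argument: substitute the bounds from Lemmas~\ref{lemma-2.3}--\ref{lemma-2.5} into \eqref{eq-Ty3}, and handle the unknown saddle $y$ of \eqref{eq-Ty1} by monotonicity of the remaining $y$-dependent factor over the interval \eqref{eq-Ty2} (your explicit observation that $y\mapsto y^{(3-t)/2}e^{2\pi yM}$ is increasing past $\tfrac{t-3}{4\pi M}$ is in fact a cleaner justification than the paper's terse ``minimizing at the lower endpoint''; likewise the AM--GM step $2\pi Ny+\tfrac{\pi}{12y}\ge \tfrac{2\pi\sqrt N}{\sqrt6}$ and the replacement $e^{-2\pi ty}<e^{-t(t-1)/(2M)}$ in (iii) are exactly what the paper does). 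Your identification of $f(N)$ as the root of $\tfrac{t(t-1)}{4\pi}=N+\tfrac{t^2-1}{24}$ also matches. For part (i), however, the paper does not re-derive the asymptotic from \eqref{eq-Ty3}: it simply quotes Tyler's equation (1.7) and applies Stirling to $\Gamma(\tfrac{t-1}{2})$. Your plan to recover the asymptotic by locating the saddle as $y=\tfrac{t-1}{4\pi M}(1+o(1))$ is under-justified: since $y$ sits in the exponent $e^{2\pi yM}$ with $2\pi M y\asymp t$, a relative error of $o(1)$ in $y$ produces a factor $e^{o(t)}$, not $1+O(t^{-\epsilon})$; you would need the saddle to relative precision $O(t^{-1-\epsilon})$, which your sketch does not establish. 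Citing Tyler's (1.7) avoids this entirely.

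The genuine gap is in part (iv). The paper's proof there is not a monotonicity argument: it writes out $\mu_1(ity)$ and $\mu_1(iy)$ as explicit $q$-series, converts \eqref{eq-Ty1} into the perturbed quadratic \eqref{eq-saddleeq}, and solves it to obtain the two-term expansion $y=\tfrac{1}{\sqrt{24N}}+O\bigl(\tfrac{(\log N)^2}{N}\bigr)$ of \eqref{eq-soly}. This quantitative control is indispensable: the factor $\exp(-tve^{-2\pi ty})$ must be compared with $\exp(-te^{-\pi t/\sqrt{6N}})$, and the discrepancy in the exponent is of size $t\,e^{-2\pi y_0t}\bigl(1-e^{-2\pi tE}\bigr)$ with $E=y-y_0$; only because $E=O((\log N)^2/N)$ does this come out as $O(N^{-1/2}(\log N)^4)$ (the worst case being $t\asymp\sqrt N\log N$), which is precisely where the stated error term comes from. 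Your phrase ``the saddle stays comparable to $1/\sqrt{24N}$'' is far too weak: $t$ may be as large as $N$, so $e^{-2\pi ty}$ is violently sensitive to $y$, and mere comparability would not even give the correct main term $\exp(-te^{-\pi t/\sqrt{6N}})$, let alone the $(\log N)^4$ error. Your alternative suggestion of extracting $c_t(N)=p(N)-t\,p(N-t)+\cdots$ directly from $\eta(tz)^t/\eta(z)$ is a different idea that might work, but the sign/positivity claims for the tail are asserted, not proved. As written, part (iv) of the proposal is missing its central computation.
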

	\begin{remark}
		Part $(i)$ of the above proposition gives an asymptotic bound for $c_t(N)$. In parts $(ii)$ and $(iii)$, we separate different ranges of $t$
		according to whether $ty<1$ or $ty\ge 1$.
	\end{remark}
	\begin{remark}
		Part (iv) of Proposition \ref{proposition-3.2} plays a central role in the proof of Theorem \ref{theorem-1.2} as it provides a sharper bound than part (iii) for large $N$. Here, we assume $N>300000>e^{4\pi}$ to ensure that $ty\ge \frac{\sqrt{N}}{2\pi}\sqrt{N}\log N\frac{1}{\sqrt{24N}}\ge1$. 
	\end{remark}

	\begin{proof} $(i)$ From equation (1.7) of \cite{tyler2026asymptotics}, when $6\le t\le \frac{2\pi\sqrt{2N}}{\sqrt{(1+\epsilon)\log N}}$ and $0< \epsilon<1$, we have
		\begin{equation}\label{eq-(2.1)}
			c_{t}(N)=\frac{(2\pi)^{\frac{t-1}{2}}}{t^{\frac{t}{2}}\Gamma\left(\frac{t-1}{2}\right)}\left(N+\frac{t^{2}-1}{24}\right)^{\frac{t-3}{2}}(1+O(t^{-\epsilon})).  
		\end{equation}
		Using Stirling's approximation,  
		$$  
		\Gamma\left(\frac{t-1}{2}\right)=\sqrt{\frac{4\pi}{t-1}}\left(\frac{t-1}{2e}\right)^{\frac{t-1}{2}}\left(1+O\left(t^{-1}\right)\right).  
		$$  
		Substituting the above in (\ref{eq-(2.1)}), we obtain  
		$$  
		c_{t}(N)=\frac{(4\pi e)^{\frac{t-1}{2}}(t-1)}{\sqrt{4\pi}(t^{2}-t)^{\frac{t}{2}}}\left(N+\frac{t^{2}-1}{24}\right)^{\frac{t-3}{2}}(1+O(t^{-\epsilon})).  
		$$ 
		$(ii)$
		From (\ref{eq-Ty2}) and the specified range of $t$, we observe that $ty<1$. By Lemma \ref{lemma-2.4}, we obtain
		\begin{align}\label{eq-part2}
			\eta(ity)\ge (ty)^{-\frac{1}{2}}\exp\left(-\frac{\pi}{12ty}-1.00873e^{-2\pi}\right).
		\end{align}	
		To obtain the following bound for $c_t(N)$, we consider the formula (\ref{eq-Ty3}), and put in the lower bound for $\eta(ity)$ from (\ref{eq-part2}), upper bound for $\eta(iy)$ from Lemma \ref{lemma-2.4}, and upper bound of $\mu_2(iy)-\mu_2(ity)$ from Lemma \ref{lemma-2.5}
		\begin{equation*}
			c_{t}(N)\ge\frac{2\sqrt{\pi}\exp\left(2\pi y \left(N+\frac{t^{2}-1}{24}\right)-1.00873te^{-2\pi}+e^{-\frac{2\pi}{y}}\right)}{t^{\frac{t+1}{2}}y^{\frac{t-3}{2}}} \left(1+O\left(t^{-1}\right)\right).  
		\end{equation*}
		In the above bound, we make $y$ explicit in terms of $t$ and $N$ by minimizing the right-hand side at $ y= \frac{t-1}{4\pi\left(N+\frac{t^{2}-1}{24}\right)}$, lower end point of the interval of $y$ in (\ref{eq-Ty2}), and obtain
		$$  
		c_{t}(N)\ge \frac{2\sqrt{\pi}\exp\left(\frac{t-1}{2}-1.00873te^{-2\pi}\right)\left(\frac{\pi}{6}(24N+t^{2}-1)\right)^{\frac{t-3}{2}}}{t^{t-1}} \left(1+O\left(t^{-1}\right)\right).  
		$$  
		$(iii)$ Again by (\ref{eq-Ty2}), we see that $ty\ge 1$ for the specified range of $t$. From Lemma \ref{lemma-2.3}, we can write
		\begin{align}\label{eq-part3}
			\eta(ity)\ge \exp\left(-\frac{\pi ty}{12}-1.00873e^{-2\pi ty}\right).
		\end{align}
		Similar to the proof of $(ii)$, we consider the asymptotic formula of $c_t(N)$ from Theorem \ref{TylerThm} $(ii)$ and use the bounds of $\eta(ity)$, $\eta(iy)$, and $\mu_2(iy)-\mu_2(ity)$ from equation (\ref{eq-part3}), Lemma \ref{lemma-2.4} and Lemma \ref{lemma-2.5} respectively. This gives
		\begin{align*}
			&c_{t}(N) \geq \sqrt{12} y^{2} \exp\left( 2\pi Ny
			+ \frac{\pi}{12y}-\frac{\pi y}{12} - 1.00873t \exp(-2\pi yt) +e^{-\frac{2\pi}{y}}\right)\\&\hspace{9cm}\times\left(1+O\left(N^{-\frac{1}{2}}\right)\right)\\
			&\geq \sqrt{12} y^{2} \exp\left(\left( \sqrt{2\pi Ny}
			-\sqrt{\frac{\pi}{12y}}\right)^{2}+\frac{2\pi\sqrt{N}}{\sqrt{6}}- 1.00873t \exp(-2\pi yt) +e^{-\frac{2\pi}{y}}\right)\\&\hspace{9cm}\times\left(1+O\left(N^{-\frac{1}{2}}\right)\right).
		\end{align*}
		To minimize the above expression, we set the squared term in the exponent equal to zero.
		From (\ref{eq-Ty2}), we have
		\begin{align*}
			& \exp\left(- 1.00873t \exp(-2\pi yt)\right)\ge \exp\left(-1.00873t\exp\left(-\frac{t(t-1)}{2\left(N+\frac{t^{2}-1}{24}\right)}\right)\right).
		\end{align*}
		Also, note 
		\begin{align*}
			\exp\left(e^{-\frac{2\pi}{y}}\right)=1+O\left(N^{-\frac{1}{2}}\right).
		\end{align*}
		Using these simplifications and the asymptotic result for the partition function (\ref{eq-hardy}), we have
		\begin{align*}
			c_{t}(N)\ge p(N)\exp\left(-1.00873t\exp\left(-\frac{t(t-1)}{2\left(N+\frac{t^{2}-1}{24}\right)}\right)\right)\left(1+O\left(N^{-\frac{1}{2}}\right)\right).
		\end{align*}
		\newline
		$(iv)$ From Lemma \ref{lemma-2.3}, we have
		\begin{equation}\label{eq-alpha}
			\eta(ity)=\exp\left(-\frac{\pi ty}{12}- v e^{-2\pi ty}\right), 
		\end{equation} where $1<v<\alpha=\frac{1}{(1-\exp(-2\pi yt))^{2}}$. The value of $\alpha$ comes from equation (\ref{eq-valpha}).
		\newline
		Using the explicit expression of $\mu_{1}$ for large\footnote{We refer to the imaginary part as large when it lies in
			$[1,\infty)$, and as small when it lies in $(0,1)$.} imaginary parts (see \cite[(4.14) and (4.16)]{tyler2026asymptotics}), we deduce 
		\begin{equation*}
			\mu_{1}(ity)=-\sum_{n=1}^{\infty}t^{2}y^{2}\sigma(n)\exp(-2\pi nty)+\frac{t^{2}y^{2}}{24},   
		\end{equation*}
		and for small imaginary parts
		\begin{equation*}
			\mu_{1}(iy)=\sum_{n=1}^{\infty}\sigma(n)\exp\left(-\frac{2\pi n}{y}\right) -\frac{1}{24}+\frac{y}{4\pi}.  
		\end{equation*} 
		Substituting the above expressions for $\mu_{1}$ in (\ref{eq-Ty1}), we obtain
		\begin{align}\label{eq-saddleeq}
			y^{2}\left(N-\frac{1}{24}\right)+\frac{y}{4\pi}-\left(\frac{1}{24}-K_1-K_2\right)=0,        
		\end{align}
		where
		\begin{align*}
			K_1=\sum_{n=1}^{\infty}t^{2}y^{2}\sigma(n)\exp(-2\pi nty)\,\,\text{and  } K_2=\sum_{n=1}^{\infty}\sigma(n)\exp\left(-\frac{2\pi n}{y}\right).
		\end{align*}
		We may consider (\ref{eq-saddleeq}) as a quadratic equation and solve 
		\begin{align*}
			y=-\frac{1}{8\pi\left(N-\frac{1}{24}\right)} +\frac{1}{\sqrt{24\left(N-\frac{1}{24}\right)}}\sqrt{1+\frac{3}{8\pi^{2}\left(N-\frac{1}{24}\right)}-24(K_1+K_2)}.   
		\end{align*}
		Substituting the expression for $y$ into $K_{1}$ \footnote{Note that  $K_1=\sum_{n=1}^{\infty}t^{2}y^{2}\sigma(n)\exp(-2\pi nty)\le \frac{(\log N)^{2}}{16\pi^{2}\sqrt{N}}+O\left(\frac{(\log N)^{2}}{N}\right)$ for the specified range of $t$. This estimate is used in the derivation of (\ref{eq-soly}).} and $K_{2}$ \footnote{We also note that $K_2=\sum_{n=1}^{\infty}\sigma(n)\exp\left(-\frac{2\pi n}{y}\right)=O(N^{-2}).$} in the given range of $t$, and simplifying further, we obtain 
		\begin{equation}\label{eq-soly}
			y=\frac{1}{\sqrt{24N}}+\frac{C_{1}(N,t)}{N}+ \frac{C_{2}}{N}+O\left(\frac{(\log N)^{2}}{N^{\frac{3}{2}}}\right)  
		\end{equation}
		is the solution of (\ref{eq-saddleeq}),  where $0<|C_1(N,t)|\le C_3(\log N)^{2}$ and, $C_2$ and $C_3$ are constants independent of $N$. We may also compute
		\begin{align}\label{eq-soly-inverse}
			y^{-1}=\sqrt{24N}\left(1-\frac{C_1(N,t)\sqrt{24}}{\sqrt{N}}-\frac{C_2\sqrt{24}}{\sqrt{N}}+O\left(\frac{(\log N)^{2}}{N}\right)\right).   
		\end{align}
		We next use the asymptotic formula for $c_t(N)$ given in
		Theorem \ref{TylerThm} $(ii)$ and combine it with the bounds for
		$\eta(ity)$, $\eta(iy)$, and $\mu_2(iy)-\mu_2(ity)$ from
		equation (\ref{eq-alpha}), Lemma \ref{lemma-2.4}, and Lemma \ref{lemma-2.5},
		respectively. This yields 
		\begin{align}\label{eq-initial} 
			\notag
			c_{t}(N) &\geq \sqrt{12} y^{2} \exp\left( y \left(2\pi N - \frac{\pi}{12}\right)  
			+ \frac{\pi}{12y} - tv \exp(-2\pi yt) +e^{-\frac{2\pi}{y}}\right)\\
			&\hspace{7.2cm}\times\left(1+O\left(N^{-\frac{1}{2}}\right)\right).  
		\end{align}  
		Now we simplify the terms inside the exponential in the following steps.
		Note from (\ref{eq-soly}) and (\ref{eq-soly-inverse})
		\begin{align}\label{eq-yy-inverse}
			2\pi Ny+\frac{\pi}{12y}=\frac{2\pi}{\sqrt{6}}\sqrt{N}+O\left(\frac{(\log N)^{2}}{N^{\frac{1}{2}}}\right).   
		\end{align}
		We also have
		\begin{align}\label{eq-error}
			-tv\exp(-2\pi yt)&=-tv\exp(-2\pi y_0t)+tv\exp(-2\pi y_0t)\left(1-\exp(-2\pi tE)\right)
		\end{align} 
		where
		\begin{align*}
			y_0=\frac{1}{\sqrt{24N}}\,\,\text{and  }E= \frac{C_{1}(N,t)}{N}+ \frac{C_{2}}{N}+O\left(\frac{(\log N)^{2}}{N^{\frac{3}{2}}}\right).  
		\end{align*}
		We take \(\exp\) of (\ref{eq-error}) and 
		substitute the value of $y$ from (\ref{eq-soly}) to get
		\begin{align}\label{eq-expy}
			\notag
			&\exp\left(-tv\exp(-2\pi yt)\right)\\
			\notag 
			&=\exp\left(-tv\exp\left(-2\pi y_0 t\right)\right)\exp\big(t v \exp(-2\pi y_{0} t)\big(1-\exp(-2\pi tE)\big)\big)\\
			&=\exp\left(-tv\exp\left(-\frac{\pi t}{\sqrt{6N}}\right)\right)\left(1+O\left(N^{-\frac{1}{2}}(\log N)^{4}\right)\right).
		\end{align}
		Here we used the fact that in the range $\frac{\sqrt{6}}{2\pi}\sqrt{N}\log N < t \le N$, the error attains its maximum at $t=\frac{\sqrt{6}}{2\pi}\sqrt{N}\log N$, which gives
		\[\exp\big(t v \exp(-2\pi y_{0} t)\big(1-\exp(-2\pi tE)\big)\big)=1+O\left(N^{-\frac{1}{2}}(\log N)^{4}\right).\]
		
		Now, using simplifications of (\ref{eq-soly}), (\ref{eq-yy-inverse}) and (\ref{eq-expy}) in (\ref{eq-initial}), and using asymptotic formula for $p(N)$ from (\ref{eq-hardy}), we obtain
		\begin{align*}
			c_t(N)&\ge\frac{\exp\left(\frac{2\pi}{\sqrt{6}}\sqrt{N}\right)}{4\sqrt{3}N}\exp\left(-tv\exp\left(-\frac{\pi t}{\sqrt{6N}}\right)\right)\left(1+O\left(N^{-\frac{1}{2}}(\log N)^{4}\right)\right)\\
			& =p(N)\exp\left(-tv\exp\left(-\frac{\pi t}{\sqrt{6N}}\right)\right) \left(1+O\left(N^{-\frac{1}{2}}(\log N)^{4}\right)\right). 
		\end{align*}
		From (\ref{eq-alpha}), $1<v<\alpha=\frac{1}{(1-\exp(-2\pi yt))^{2}}$. One can easily check that for the given range of $t$, $\alpha=1+O\left(N^{-\frac{1}{2}}\right)$.
		Consequently,
		\begin{equation*}
			c_{t}(N) \ge p(N) \exp\left(-t\exp\left(-\frac{\pi t}{\sqrt{6N}}\right)\right) \left(1+O\left(N^{-\frac{1}{2}}(\log N)^{4}\right)\right).   
		\end{equation*} 
		
	\end{proof}	
	In the next lemma, we present a sketch of the proof of the Erd\" os and Lehner's result for $p_t(N)$ \cite{erdos1941distribution}, with an explicit error term, which is an addition to the original paper.
	\begin{lemma}[Erd\"os, Lehner]\label{lemma-3.4}
		Let $p_{t}(N)$ be the number of partitions of $N$ in which no summands exceed $t$. Then, for  
		$
		t=C^{-1}\sqrt{N}\log N+x\sqrt{N},
		$
		we have  
		$$  
		p_{t}(N)=p(N)\exp\left(-\frac{2}{C}e^{-\frac{1}{2}Cx}\right)\left(1+O\left(\frac{(\log N +x)^{2}}{N^{\frac{1}{2}}}\right)\right),
		$$ 
		where	$C=2\pi/\sqrt{6}$ and $x\ll N^{\frac{1}{4}-\gamma},\ \gamma>0$.
	\end{lemma}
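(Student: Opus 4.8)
The plan is to reduce everything to the Rademacher asymptotic \eqref{eq-hardy} by means of an inclusion–exclusion identity for $p_t(N)$ — essentially the Erd\H{o}s--Lehner argument with the error tracked. For a finite set $S\subseteq\{t+1,t+2,\dots\}$ write $\sigma(S)=\sum_{k\in S}k$; then $p\big(N-\sigma(S)\big)$ counts the partitions of $N$ containing at least one part of each size in $S$ (remove one copy of each), so by inclusion–exclusion the partitions of $N$ with no part exceeding $t$ are counted by
\[
p_t(N)=\sum_{\substack{S\subseteq\{t+1,t+2,\dots\}\\ \sigma(S)\le N}}(-1)^{|S|}\,p\big(N-\sigma(S)\big)=\sum_{j\ge0}(-1)^j\!\!\sum_{t<m_1<\dots<m_j}\!\!p\big(N-m_1-\dots-m_j\big),
\]
which is also the coefficient of $x^N$ in $\big(\sum_{n\ge0}p(n)x^n\big)\prod_{k>t}(1-x^k)$. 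The sum is finite, and since $m_1+\dots+m_j\le N$ only $j\ll\sqrt N/\log N$ occur. I would then estimate each term by \eqref{eq-hardy}, carry out the sum over $j$, and recognise the exponential series.

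For the estimation, set $\beta:=\pi/\sqrt{6N}$ (so $\beta=C/(2\sqrt N)$ with $C=2\pi/\sqrt6$); from \eqref{eq-hardy}, for $M$ in the relevant range,
\[
\frac{p(N-M)}{p(N)}=e^{-\beta M}\Big(1+O\big(\tfrac MN\big)+O\big(\tfrac{M^2}{N^{3/2}}\big)+O\big(N^{-1/2}\big)\Big),
\]
while for $M$ a positive fraction of $N$ the ratio is exponentially small. Using $\beta t=\tfrac12\log N+\tfrac{Cx}{2}$, one gets $\sum_{m>t}e^{-\beta m}=\dfrac{e^{-\beta(t+1)}}{1-e^{-\beta}}=\dfrac2C e^{-Cx/2}\big(1+O(N^{-1/2})\big)=:u$. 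Replacing the ordered sum $\sum_{t<m_1<\dots<m_j}e^{-\beta(m_1+\dots+m_j)}$ by $\tfrac1{j!}\big(\sum_{m>t}e^{-\beta m}\big)^j$ — the repeated-index terms costing only a relative factor $1+O(j^2N^{-1/2})$, since $\sum_{m>t}e^{-2\beta m}\asymp N^{-1/2}\big(\sum_{m>t}e^{-\beta m}\big)^2$ — the $j$-th summand reduces to $p(N)\tfrac{(-1)^j}{j!}u^j(1+\text{error})$, and $\sum_{j\ge0}(-u)^j/j!=e^{-u}$ yields $p_t(N)=p(N)e^{-u}(1+\text{error})=p(N)\exp\!\big(-\tfrac2C e^{-Cx/2}\big)(1+\text{error})$, the last step absorbing $u-\tfrac2C e^{-Cx/2}=O(N^{-1/2})$ into the error.

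What remains is to propagate the three error sources through the $j$-summation. The $O(M^2/N^{3/2})$ term is the dominant one: each index slot $m_i$ has size $\asymp t$, so it contributes $t^2/N^{3/2}\asymp(\log N+x)^2/\sqrt N$, and summation against $u^j/j!$ keeps this at $O\big((\log N+x)^2/N^{1/2}\big)$; the $O(M/N)$ and $O(N^{-1/2})$ terms and the diagonal correction are of smaller order. The tail $j>J$ with $J\asymp\log N$ I would bound crudely via $p(N-\sigma)/p(N)\ll N e^{-\beta\sigma}$, giving $\ll N\sum_{j>J}u^j/j!$, which is negligible. I expect the main obstacle to be exactly this uniform bookkeeping of the error; one should also note that the alternating cancellation in the $j$-series survives the per-term errors only while $u=\tfrac2C e^{-Cx/2}$ stays bounded, which is what makes the hypothesis $x\ll N^{1/4-\gamma}$ (together with an implicit lower bound on $x$) natural. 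The supporting steps — uniformity of \eqref{eq-hardy} over the relevant range of $M$, the diagonal estimate, and the tail bound — are routine once the displayed identity is in place; a saddle-point treatment of $[x^N]\big(\sum p(n)x^n\big)\prod_{k>t}(1-x^k)$ would be an alternative route but is less elementary.
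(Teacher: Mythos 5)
Your proposal is correct and follows essentially the same route as the paper: both start from the Erd\H{o}s--Lehner inclusion--exclusion identity (your sum over sets $S$ of distinct parts $>t$ is exactly the paper's $1-S_1+S_2-\cdots$ with $m_i=t+r_i$), estimate each term by the Rademacher asymptotic to get $S_j\approx \frac{1}{j!}\left(\frac{2}{C}e^{-Cx/2}\right)^{j}$, and sum the exponential series, with the dominant error $O\left(t^2/N^{3/2}\right)=O\left((\log N+x)^2/\sqrt{N}\right)$ coming from the expansion of $\sqrt{N-M}$. If anything, your treatment of the diagonal correction and the tail in $j$ is more explicit than the paper's ``Similarly, we find $S_k$'' step.
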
  
	\begin{proof}  
		Erd\"{o}s and Lehner \cite{erdos1941distribution} expressed $p_t(N)$ as follows  
		\begin{align*}  
			p_{t}(N) &= p(N)-\sum_{1\le r\le N-t}p(N-(t+r))\\
			&+\sum_{\substack{0<r_{1}<r_{2}\\1<r_{1}+r_{2}\le N-2t} }p(N-(t+r_{1})-(t+r_{2}))\\  
			&\quad - \sum_{\substack{0<r_{1}<r_{2}<r_{3}\\1<r_{1}+r_{2}+r_{3}\le N-3t} }p(N-(t+r_{1})-(t+r_{2})-(t+r_{3})) -\cdots \\  
			&= p(N)(1-S_{1}+S_{2}-S_{3}+\cdots).  
		\end{align*}  
		We may observe  
		$$  
		1-S_{1}+S_{2}-\cdots-S_{2k-1} \leq \frac{p_{t}(N)}{p(N)} \leq 1-S_{1}+S_{2}-\cdots+S_{2k}.  
		$$  
		Further
		$$  
		\begin{aligned}  
			S_{1} &= \frac{1}{p(N)}\sum_{1\le r\le N-t}p(N-(t+r))\\  
			&= \frac{1}{p(N)}\sum_{r\le N^{\frac{3}{5}}}p(N-(t+r))+\frac{1}{p(N)}\sum_{r>N^{\frac{3}{5}}}p(N-(t+r))\\
			&=I_{1}+I_{2}.  
		\end{aligned}  
		$$ 
		We simplify the above sum using the formula (\ref{eq-hardy}). It can be easily verified that $I_2$ tends to zero as $N\to \infty$. For $I_1$, note 
		$$  
		\sum_{r\le N^{\frac{3}{5}}}\frac{N}{N-t-r}\exp\left(C\sqrt{N-t-r}-C\sqrt{N}\right)\left(1+O\left(N^{-\frac{1}{2}}\right)\right).  
		$$  
		Since 
		$t=C^{-1}\sqrt{N}\log N+x\sqrt{N}$,  we approximate $\sqrt{N-t-r} = \sqrt{N}-\frac{1}{2\sqrt{N}}(t+r)+O\left(\frac{t^{2}}{N^{\frac{3}{2}}}\right)$. As $x\ll N^{\frac{1}{4}-\gamma}$,  we have $\exp\left(O\left(\frac{t^{2}}{N^{\frac{3}{2}}}\right)\right)=1+O\left(\frac{t^{2}}{N^{\frac{3}{2}}}\right)$. Now we simplify $I_1$ as follows
		$$\begin{aligned}
			I_{1}&=\sum_{r\le N^{\frac{3}{5}}}\exp\left(-\frac{C(t+r)}{2\sqrt{N}}\right)\left(1+O\left(\frac{t^{2}}{N^{\frac{3}{2}}}\right)\right)\\
			&=N^{-\frac{1}{2}}\exp\left(-\frac{Cx}{2}\right)\sum_{1\le r\le N^{\frac{3}{5}}}\exp\left(-\frac{CrN^{-\frac{1}{2}}}{2}\right)\left(1+O\left(\frac{(\log N +x)^{2}}{N^{\frac{1}{2}}}\right)\right)\\
			&=\frac{2}{C}\exp\left(-\frac{1}{2}Cx\right)\left(1+O\left(\frac{(\log N +x)^{2}}{N^{\frac{1}{2}}}\right)\right).
		\end{aligned}
		$$
		Therefore,
		$$  
		S_{1} = \frac{2}{C}\exp\left(-\frac{1}{2}Cx\right)\left(1+O\left(\frac{(\log N +x)^{2}}{N^{\frac{1}{2}}}\right)\right).  
		$$ 
		Similarly, we find  
		$$  
		S_{k} = \frac{1}{k!}\left(\frac{2}{C}\exp\left(-\frac{1}{2}Cx\right)\right)^{k}\left(1+O\left(\frac{(\log N +x)^{2}}{N^{\frac{1}{2}}}\right)\right).  
		$$  
		This gives 
		$$  
		p_{t}(N)=p(N)\exp\left(-\frac{2}{C}e^{-\frac{1}{2}Cx}\right)\left(1+O\left(\frac{(\log N +x)^{2}}{N^{\frac{1}{2}}}\right)\right).  
		$$  
	\end{proof}
	\section{Proof of Theorem \ref{theorem-1.2} and \ref{theorem-1.3}}
	We now proceed to prove our main theorem. 
	\begin{proof}[Proof of Theorem \ref{theorem-1.2}]
		From (\ref{eq-(1.1)}), we have
			\begin{align}
				\notag
				Z(N) \ge\; &\sum_{t=1}^{N} c_{t}(N)\, p_{t}(N-t) \\
				\notag
				\ge\; &\sum_{t=T_1+1}^{T_2} c_{t}(N)\, p_{t}(N-t) 
				+ \sum_{t=T_2+1}^{N} c_{t}(N)\, p_{t}(N-t) \\
				\label{eq-(A)}
				=\; &A_{1} + A_{2}.
			\end{align}
		Here, we have chosen
		\begin{equation}\label{eq-T1T2}
			\begin{aligned}
				T_{1}&=\frac{\sqrt{6}}{2\pi}\sqrt{N}(\log N)\left(1+\frac{1}{2B}\right)\,\, \,\,\text{and  }
				T_2&= \frac{\sqrt{6}}{2\pi} \sqrt{N} (\log N) \left( 1 + \frac{1}{B} \right),
			\end{aligned}
		\end{equation}
		where $B$ is defined by the relation 
		$$N^{\frac{1}{2B}} =  \frac{\sqrt{6}}{2\pi} \log N.$$
		\newline
		Before we obtain bounds for $A_1$ and $A_2$, we need the following estimates of $c_t(N)$ and $p_t(N-t)$. By part $(iv)$ of Proposition \ref{proposition-3.2}, 
		\begin{align}
			&c_t(N) \ge\;  p(N) \exp\left(-t \exp\left(-\frac{\pi t}{\sqrt{6N}}\right)\right)
			\left(1 + O\left(N^{-\frac12}(\log N)^{4}\right)\right) \label{eq-(4.1)}\\
			=\; & p(N) \left( \sum_{j=0}^{\infty} (-1)^{j} \frac{t^{j}}{j!} 
			\exp\left( -\frac{\pi t j}{\sqrt{6N}} \right)\right) 
			\left(1 + O\left(N^{-\frac12}(\log N)^{4}\right)\right),\,\, \text{for } t\ge T_1. \label{eq-(4.12)}
		\end{align} 
		Note, by using Lemma \ref{lemma-3.4} and equation (\ref{eq-hardy}), we get 
		\begin{align}\label{eq-(4.3)}		
			p_{t}(N-t)=&p(N-t)\left(1+O\left(\frac{1}{\sqrt{\log N}}\right)\right)\\
			\notag
			=&p(N)\exp\left(-\frac{\pi t}{\sqrt{6N}}\right)\left(1+O\left(\frac{1}{\sqrt{\log N}}\right)\right),
		\end{align}
		for all $T_1\le t\le N^{\frac{3}{4}-\delta}=M$ with $0<\delta<\frac{1}{4}$.
		Now we will obtain the following lower bounds for $A_2$ and $A_1$.
		\begin{claim}\label{claim-1}
			\begin{align*}
				A_2\ge\left(2-\frac{2}{e}\right)\frac{p(N)^{2}}{\log N}\left(1+O\left(\frac{1}{\sqrt{\log N}}\right)\right).
			\end{align*}		
		\end{claim}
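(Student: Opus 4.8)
The plan is to discard every term with $t>M=N^{\frac34-\delta}$ from the sum defining $A_2$ (legitimate, since all terms are nonnegative), and then substitute the lower bound (\ref{eq-(4.1)}) for $c_t(N)$ together with the asymptotic (\ref{eq-(4.3)}) for $p_t(N-t)$ — both valid on the retained range because $T_1<T_2\le t\le M$. Since the error terms in (\ref{eq-(4.1)}) and (\ref{eq-(4.3)}) are uniform in $t$, this reduces the claim to
$$
A_2\ \ge\ p(N)^{2}\left(1+O\!\left(\tfrac1{\sqrt{\log N}}\right)\right)\sum_{T_2<t\le M}\exp\!\left(-t\,e^{-\pi t/\sqrt{6N}}\right)e^{-\pi t/\sqrt{6N}},
$$
so it suffices to show the sum $S$ on the right is $\frac{2}{\log N}\bigl(1-e^{-1}\bigr)\bigl(1+O(1/\sqrt{\log N})\bigr)$. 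Truncating at $t=M$ costs nothing asymptotically because the calculation below shows the mass of $S$ is concentrated at $t$ of size $\asymp\sqrt N\log N=o(M)$.

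The next step is to replace $S$ by an integral. Writing $c=\pi/\sqrt{6N}$ and $g(t)=\exp(-t e^{-ct})e^{-ct}$, one checks that $g$ is unimodal on $[T_2,M]$ (increasing, then decreasing) with peak value at most $e^{-cT_2}=O\!\left(1/(\sqrt N\log N)\right)$, since $g(t)\le e^{-ct}$; comparing each integer term with the integral of $g$ over an adjacent unit interval then gives $S=\int_{T_2}^{M}g(t)\,dt+O\!\left(1/(\sqrt N\log N)\right)$, an error negligible against $1/\log N$. In the integral I would substitute $t=\tfrac{\sqrt{6N}}{\pi}(L+a)$ with $L:=cT_2=\tfrac12(\log N)\bigl(1+\tfrac1B\bigr)$; using the defining relation $N^{1/(2B)}=\tfrac{\sqrt6}{2\pi}\log N$ one finds $e^{-cT_2}/c=2/\log N$ and $t\,e^{-ct}=\bigl(1+\tfrac1B\bigr)\bigl(1+\tfrac aL\bigr)e^{-a}$, so that
$$
\int_{T_2}^{M}g(t)\,dt=\frac{2}{\log N}\int_{0}^{a_M}\exp\!\left(-\Bigl(1+\tfrac1B\Bigr)\Bigl(1+\tfrac aL\Bigr)e^{-a}\right)e^{-a}\,da,\qquad a_M=cM-L=\tfrac{\pi}{\sqrt6}N^{\frac14-\delta}-O(\log N)\to\infty.
$$

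The concluding step is to show the remaining integral converges to $\int_{0}^{\infty}e^{-e^{-a}}e^{-a}\,da=\int_{0}^{1}e^{-y}\,dy=1-e^{-1}$ with a controlled error. The tail $a>a_M$ contributes $O(e^{-a_M})$; on $[0,a_M]$ the inequality $|e^{-x}-e^{-y}|\le|x-y|$ bounds the difference of the two integrands by $\bigl(\tfrac1B+\tfrac aL+\tfrac{a}{BL}\bigr)e^{-2a}$, and $\int_{0}^{\infty}\bigl(\tfrac1B+\tfrac aL\bigr)e^{-2a}\,da=\tfrac1{2B}+\tfrac1{4L}=O\!\left(\tfrac{\log\log N}{\log N}\right)=O\!\left(\tfrac1{\sqrt{\log N}}\right)$, since $\tfrac1{2B}=\tfrac1{\log N}\log\!\bigl(\tfrac{\sqrt6}{2\pi}\log N\bigr)$. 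Feeding this back — the $O(N^{-1/2}(\log N)^4)$ error from (\ref{eq-(4.1)}) and the $O(1/\sqrt{\log N})$ error from (\ref{eq-(4.3)}) both being absorbed — will yield $A_2\ge p(N)^{2}\cdot\tfrac{2}{\log N}\bigl(1-e^{-1}\bigr)\bigl(1+O(1/\sqrt{\log N})\bigr)=\bigl(2-\tfrac2e\bigr)\dfrac{p(N)^{2}}{\log N}\bigl(1+O(1/\sqrt{\log N})\bigr)$, which is the claim.

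The main obstacle I anticipate is bookkeeping rather than any single hard estimate: one must be sure that truncating at $t\le M$ and then passing from a sum to an integral over the non-monotone summand cost only negligible amounts, and — the genuinely delicate point — that the corrections $\tfrac1B$ and $\tfrac aL$ inside the exponential, which are forced by the slightly inflated cutoffs $T_1,T_2$ in (\ref{eq-T1T2}), integrate to $O(1/\sqrt{\log N})$ and not to something of larger order (this is where the precise definition of $B$, making $N^{1/(2B)}$ logarithmic in $N$, is used). The change of variables and the identification of the limiting integral as $1-e^{-1}$ are otherwise routine.
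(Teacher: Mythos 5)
Your proposal is correct, and it reaches the constant $2-2/e$ by a genuinely different route from the paper's proof of this claim. The paper expands $\exp\left(-t\exp\left(-\pi t/\sqrt{6N}\right)\right)$ as the power series $\sum_{j\ge 0}(-1)^{j}t^{j}e^{-\pi tj/\sqrt{6N}}/j!$, interchanges the $j$- and $t$-sums, treats the $j=0$ term over the \emph{full} range $T_2<t\le N$ via the exact identity $\sum_{t>T_2}p_t(N-t)=p(N)-p_{T_2}(N)$ together with Lemma \ref{lemma-3.4}, evaluates each $j\ge 1$ term on $(T_2,M]$ by Abel summation to get $\frac{2}{(j+1)\log N}$, bounds the $t>M$ tail of the $j\ge 1$ terms crudely as in (\ref{eq-A2error}), and finally recovers the constant as $\sum_{j\ge 0}\frac{(-1)^j}{(j+1)!}=1-e^{-1}$. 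You instead truncate the whole sum at $M$ up front (harmless for a lower bound, since the discarded $j=0$ mass beyond $M$ is negligible), keep the double exponential intact, pass from the sum to $\int_{T_2}^{M}G(t)\,dt$ using unimodality, and evaluate the integral by the substitution $ct=L+a$, identifying the limit as $\int_{0}^{1}e^{-y}\,dy=1-e^{-1}$; your error bookkeeping (the $1/(2B)$ and $1/(4L)$ terms being $O(\log\log N/\log N)$) is the correct and the genuinely delicate point, and it lands inside the claimed $O(1/\sqrt{\log N})$. What each approach buys: the paper's series expansion reduces everything to geometric-type sums at the cost of justifying the interchange and handling the $j\ge1$ tail separately, while your integral method obtains the constant in one stroke, avoids the interchange entirely, and has the aesthetic advantage of treating $A_2$ by exactly the same Laplace/Euler-summation technique the paper itself uses for $A_1$ in Claim \ref{claim-2}. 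Both yield the same main term and the same error quality.
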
	
		\begin{claim}\label{claim-2}
			\begin{align*}
				A_{1}\ge \frac{2p(N)^{2}}{e\log N}\left(1+O\left(\frac{1}{\sqrt{\log N}}\right)\right).
			\end{align*}
		\end{claim}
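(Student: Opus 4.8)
The plan is to estimate $A_1=\sum_{t=T_1+1}^{T_2}c_t(N)p_t(N-t)$ by replacing both factors with their main terms and converting the resulting exponential sum into an elementary integral. Abbreviate $c=\pi/\sqrt{6N}$ and $v(t)=te^{-ct}$. Since $T_2\asymp\sqrt N\log N<M=N^{3/4-\delta}$ for large $N$, the lower bound \eqref{eq-(4.1)} for $c_t(N)$ (valid for $t\ge T_1$) and the estimate \eqref{eq-(4.3)} for $p_t(N-t)$ (valid for $T_1\le t\le M$) both hold throughout the range of summation; multiplying them gives
$$
A_1\ \ge\ p(N)^2\left(1+O\!\left(\tfrac{1}{\sqrt{\log N}}\right)\right)\sum_{t=T_1+1}^{T_2}e^{-v(t)-ct}.
$$
Thus it suffices to show the exponential sum equals $\tfrac{2}{e\log N}\bigl(1+O(1/\sqrt{\log N})\bigr)$.

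The first concrete step is to pin down the values $W_1:=v(T_1)$ and $W_2:=v(T_2)$. Feeding the definitions of $T_1,T_2$ from \eqref{eq-T1T2} together with the defining relation $N^{1/2B}=\tfrac{\sqrt 6}{2\pi}\log N$ into $v$, a short computation gives $\pi T_i/\sqrt{6N}=\tfrac12\log N+O(\log\log N)$, hence
$$
W_1=\left(\tfrac{\sqrt 6}{2\pi}\log N\right)^{1/2}\!\!\left(1+\tfrac{1}{2B}\right)\longrightarrow\infty,\qquad W_2=1+\tfrac1B\longrightarrow 1 .
$$
In particular $e^{-W_1}$ is smaller than any fixed negative power of $\log N$, while $e^{-W_2}=e^{-1}\bigl(1+O(\log\log N/\log N)\bigr)$; and since $cT_1>1$ for $N\ge100$, $v$ is strictly decreasing on $[T_1,T_2]$.

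Next I would compare the sum with $\int_{T_1}^{T_2}e^{-v(t)-ct}\,dt$. Writing $S(t)=e^{-v(t)-ct}$ one has $\tfrac{d}{dt}\log S(t)=e^{-ct}(ct-1)-c$, whose derivative $ce^{-ct}(2-ct)$ is negative on the window (as $cT_1>2$); so $\tfrac{d}{dt}\log S$ is decreasing on $[T_1,T_2]$, and a direct evaluation at $t=T_2$ using $v(T_2)=1+1/B$ shows it is still positive there. Hence $S$ is increasing on the whole window, so the sum dominates the integral (up to a negligible boundary term). To evaluate the integral, substitute $w=v(t)$; using $v'(t)=e^{-ct}(1-ct)$ and $ct=\log t-\log w$ (from $w=te^{-ct}$), it becomes $\int_{W_2}^{W_1}\frac{e^{-w}}{ct(w)-1}\,dw$ with $t(w)=v^{-1}(w)\in[T_1,T_2]$. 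On this range $\log t(w)=\tfrac12\log N+O(\log\log N)$ and $\log w=O(\log\log N)$, so $ct(w)-1=\tfrac12\log N+O(\log\log N)$ uniformly; pulling out this factor and using $\int_{W_2}^{W_1}e^{-w}\,dw=e^{-W_2}-e^{-W_1}=e^{-1}+O(\log\log N/\log N)$ yields $\tfrac{2}{e\log N}\bigl(1+O(\log\log N/\log N)\bigr)$. Combining with the displayed reduction and absorbing the smaller errors ($O(N^{-1/2}(\log N)^4)$ from \eqref{eq-(4.1)} and $O(\log\log N/\log N)$) into $O(1/\sqrt{\log N})$ establishes Claim~\ref{claim-2}.

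The step I expect to need the most care is the positioning of the window. Everything hinges on the specific choice of $T_1,T_2,B$ in \eqref{eq-T1T2}: it places both endpoints to the right of the maximizer $t=\sqrt{6N}/\pi$ of $v$, yet makes the window narrow enough that $v$ only descends from $\approx(\tfrac{\sqrt 6}{2\pi}\log N)^{1/2}$ down to $\approx 1$, which is exactly what forces $e^{-W_1}\approx 0$, $e^{-W_2}\approx e^{-1}$, and (through the monotonicity of $S$) the clean one-sided comparison with the integral. Verifying that $S$ is increasing across the \emph{entire} window, rather than merely near one endpoint, is the one genuinely delicate check; the rest is bookkeeping with estimates already in hand.
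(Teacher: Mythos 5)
Your proof is correct, and it reaches the bound by essentially the same route as the paper: the same window $[T_1,T_2]$, the same inputs \eqref{eq-(4.1)} and \eqref{eq-(4.3)}, reduction to $\sum_{T_1<t\le T_2}G(t)$ with $G(t)=\exp(-te^{-\pi t/\sqrt{6N}})e^{-\pi t/\sqrt{6N}}$, and localization of the resulting integral at the endpoint $t=T_2$. Your endpoint values $W_1=(\tfrac{\sqrt6}{2\pi}\log N)^{1/2}(1+\tfrac{1}{2B})\to\infty$ and $W_2=1+\tfrac1B\to1$ check out against \eqref{eq-T1T2} and the defining relation for $B$. Where you genuinely differ is in the two technical steps. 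For the sum-versus-integral comparison the paper uses Euler summation \eqref{eq-(4.8)} and then estimates the correction terms, whereas you show $G$ is increasing on the whole window (the derivative of $\tfrac{d}{dt}\log G$ is $ce^{-ct}(2-ct)<0$ there, and $\tfrac{d}{dt}\log G(T_2)>0$), which yields the one-sided inequality $\sum G(t)\ge\int_{T_1}^{T_2}G-O(N^{-1/2})$ directly; this is all that a lower bound requires and is slightly cleaner. For the integral, the paper substitutes $u=e^{-ct}$ and invokes the Laplace method for endpoint maxima to obtain \eqref{eq-(4.9)}, while your substitution $w=te^{-ct}$ converts the integral exactly into $\int_{W_2}^{W_1}e^{-w}\,(ct(w)-1)^{-1}\,dw$ with $ct(w)-1=\tfrac12\log N\,(1+O(\log\log N/\log N))$ uniformly, so the value $\tfrac{2}{\log N}(e^{-W_2}-e^{-W_1})=\tfrac{2}{e\log N}(1+o(1))$ falls out of $\int e^{-w}\,dw$ with no appeal to an external asymptotic lemma. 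The net result and error term match the paper's; your version is marginally more self-contained.
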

		First, we obtain the above lower bound for $A_2$.
		\begin{proof}[Proof of Claim \ref{claim-1}]
			
			We plug in $c_t(N)$ from (\ref{eq-(4.12)}) and $p_t(N-t)$ from (\ref{eq-(4.3)}) in the expression for $A_2$, and get
			\begin{align}\label{eq-(4.2)}
				\notag
				A_{2}\ge\; & p(N)\left( \sum_{t=T_2+1}^{N} p_{t}(N-t) 
				+\sum_{j=1}^{\infty} \sum_{t=T_2+1}^{N}p_{t}(N-t)   (-1)^{j} \frac{t^{j}}{j!} 
				\exp\left( -\frac{\pi t j}{\sqrt{6N}} \right)\right)\\
				\notag
				&\hspace{7.2cm}\times   \left(1 + O\left(N^{-\frac12}(\log N)^{4}\right)\right)\\
				\notag
				=& p(N)\left( \big( p(N) - p_{T_{2}}(N) \big) 
				+p(N)\sum_{j=1}^{\infty} \sum_{t=T_2+1}^{M}   (-1)^{j} \frac{t^{j}}{j!} 
				\exp\left( -\frac{\pi t (j+1)}{\sqrt{6N}} \right)\right)\\
				\notag
				&\hspace{7.2cm}\times\left(1+O\left(\frac{1}{\sqrt{\log N}}\right)\right)\\
				+&p(N)\left(\sum_{j=1}^{\infty} \sum_{t=M+1}^{N}p_{t}(N-t)   (-1)^{j} \frac{t^{j}}{j!} 
				\exp\left( -\frac{\pi t j}{\sqrt{6N}} \right)\right)   \left(1 + O\left(N^{-\frac12}(\log N)^{4}\right)\right).
			\end{align}
			Note
			\begin{align}\label{eq-A2error}
				\notag
				&\left|\sum_{j=1}^{\infty}\sum_{t=M+1}^{N}p_t(N-t)(-1)^{j} \frac{t^{j}}{j!} 
				\exp\left( -\frac{\pi t j}{\sqrt{6N}} \right)\right|\\
				\notag
				&\le\left(\sum_{t=M+1}^{N}p_t(N-t)\right)\max_{\substack{M+1 \le t \le N }}\left|\sum_{j=1}^{\infty}(-1)^{j} \frac{t^{j}}{j!} 
				\exp\left( -\frac{\pi t j}{\sqrt{6N}} \right)\right|\\
				\notag
				&\le p(N)\max_{\substack{M+1 \le t \le N }}\left(\exp \left(t\exp\left( -\frac{\pi t }{\sqrt{6N}} \right)\right)-1\right) \\
				&=O\left(\frac{p(N)}{(\log N)^{2}}\right).
			\end{align} 
			For the first part of (\ref{eq-(4.2)}), we use Lemma \ref{lemma-3.4} at $t=T_{2}$ to obtain
			\begin{equation}\label{eq-(4.4)}	
				p(N) - p_{T_2}(N) 
				= \frac{2p(N)}{\log N} \left(1+O\left(\frac{1}{\sqrt{\log N}}\right)\right).
			\end{equation}
			\newline
			The sum $ \sum_{t=T_2+1}^{M}$ in (\ref{eq-(4.2)}) can be simplified using Abel summation
			\begin{align}\label{eq-(4.5)}
				\notag
				&\sum_{T_2<t\le M}t^{j}\exp\left(-\frac{\pi t(j+1)}{\sqrt{6N}}\right)\\
				\notag
				=&M^{j}\frac{\exp\left(-\frac{\pi (j+1)}{\sqrt{6N}}\right)-\exp\left(-\frac{\pi (M+1)(j+1)}{\sqrt{6N}}\right)}{1-\exp\left(-\frac{\pi (j+1)}{\sqrt{6N}}\right)}\\
				\notag
				-&T_2^{j}\frac{\exp\left(-\frac{\pi (j+1)}{\sqrt{6N}}\right)-\exp\left(-\frac{\pi (T_2+1)(j+1)}{\sqrt{6N}}\right)}{1-\exp\left(-\frac{\pi (j+1)}{\sqrt{6N}}\right)}\\
				\notag
				-&j\int_{T_2}^{M}u^{j-1}\frac{\exp\left(-\frac{\pi (j+1)}{\sqrt{6N}}\right)-\exp\left(-\frac{\pi (u+1)(j+1)}{\sqrt{6N}}\right)}{1-\exp\left(-\frac{\pi (j+1)}{\sqrt{6N}}\right)}du\\
				=&T_2^{j}\frac{\exp\left(-\frac{\pi (T_2+1)(j+1)}{\sqrt{6N}}\right)}{1-\exp\left(-\frac{\pi (j+1)}{\sqrt{6N}}\right)}-M^{j}\frac{\exp\left(-\frac{\pi (M+1)(j+1)}{\sqrt{6N}}\right)}{1-\exp\left(-\frac{\pi (j+1)}{\sqrt{6N}}\right)}\\
				\notag
				+&\frac{j}{1-\exp\left(-\frac{\pi (j+1)}{\sqrt{6N}}\right)}\int_{T_2}^{M}u^{j-1}\exp\left(-\frac{\pi (u+1)(j+1)}{\sqrt{6N}}\right)du.
			\end{align}  
			Pluging in $T_2$ from equation (\ref{eq-T1T2}) in the first term of (\ref{eq-(4.5)}), we obtain    
			\begin{align}\label{eq-T2}
				T_2^{j}\frac{\exp\left(-\frac{\pi (T_2+1)(j+1)}{\sqrt{6N}}\right)}
				{1-\exp\left(-\frac{\pi (j+1)}{\sqrt{6N}}\right)}
				= \frac{2}{(j+1)\log N}\left(1 + O\left(\frac{\log \log N}{\log N}\right)\right).
			\end{align}
			The second term of (\ref{eq-(4.5)}), evaluated at $M = N^{\frac{3}{4}-\delta}$, gives  
			\begin{align}\label{eq-M}
				M^{j}\frac{\exp\left(-\frac{\pi (M+1)(j+1)}{\sqrt{6N}}\right)}
				{1-\exp\left(-\frac{\pi (j+1)}{\sqrt{6N}}\right)}
				\le \frac{C_{4}}{(j+1)N^{2}},
			\end{align}
			for some positive constant $C_{4}$.
			\newline
			Combining (\ref{eq-T2}), (\ref{eq-M}), we get the following expression for (\ref{eq-(4.5)}) 
			\begin{align}\label{eq-Error-j}
				\sum_{T_2 < t \le M} t^{j}\exp\left(-\frac{\pi t(j+1)}{\sqrt{6N}}\right)
				= \frac{2}{(j+1)\log N}\left(1 + O\left(\frac{\log\log N}{\log N}\right)\right)
				+ E_j,
			\end{align}
			and again applying integration by parts we estimate $E_j$ as  
			\[|E_j| \le \frac{L}{(j+1)(\log N)^{2}},\] 
			for some positive constant $L$ independent of $N$ and $j$.
			\newline       
			Now the estimates from (\ref{eq-A2error}), (\ref{eq-(4.4)}) and (\ref{eq-Error-j}) simplifies (\ref{eq-(4.2)}), and gives a lower bound for $A_{2}$
			\begin{align*}
				A_{2}\ge& \left(\frac{2p(N)^{2}}{\log N}+\frac{2p(N)^{2}}{\log N}\sum_{j=1}^{\infty}\frac{(-1)^{j}}{(j+1)!}\right)\left(1+O\left(\frac{1}{\sqrt{\log N}}\right)\right)\\
				\notag
				=&\left(2-\frac{2}{e}\right)\frac{p(N)^{2}}{\log N}\left(1+O\left(\frac{1}{\sqrt{\log N}}\right)\right).
			\end{align*}  
		\end{proof} 
		Next, we calculate the lower bound for $A_{1}$.
		\begin{proof}[Proof of Claim \ref{claim-2}]
			
			We again use (\ref{eq-(4.1)}) and (\ref{eq-(4.3)}), which are also valid for $T_{1} \leq t \leq T_{2}$, and obtain:		
				\begin{align}\label{eq-(4.7)}
					\notag
					A_1&=\sum_{t=T_1+1}^{T_2}c_{t}(N)p_{t}(N-t)\\
					\notag
					&=\left(\sum_{t=T_1+1}^{T_2}c_{t}(N)p(N-t)\right)\left(1+O\left(\frac{1}{\sqrt{\log N}}\right)\right) \\
					&\ge p(N)^{2}\left(\sum_{t=T_1+1}^{T_2}
					\exp\left(-t\exp\left(-\tfrac{\pi t}{\sqrt{6N}}\right)\right)
					\exp\left(-\frac{\pi t}{\sqrt{6N}}\right)\right)\left(1+O\left(\frac{1}{\sqrt{\log N}}\right)\right).
				\end{align}
			Let
			$$
			G(t)=\exp\left(-t\exp\left(-\frac{\pi t}{\sqrt{6N}}\right)\right)
			\exp\left(-\frac{\pi t}{\sqrt{6N}}\right).
			$$
			Using Euler summation, we obtain 
			\begin{align}\label{eq-(4.8)}
				\notag
				\sum_{T_1<t\le T_2} G(t)
				&= \int_{T_1}^{T_2} G(t)\,dt 
				+ \int_{T_1}^{T_2} (t-[t])\,G'(t)\,dt\\& + G(T_2)([T_2]-T_2) - G(T_1)([T_1]-T_1). 
			\end{align}
			After substituting the explicit values of $T_1$ and $T_2$ into the second term of the above expression, we get
			\begin{align*}
				\int_{T_1}^{T_2} (t-[t])\,G'(t)\,dt
				+ G(T_2)([T_2]-T_2)
				- G(T_1)([T_1]-T_1)
				= O\!\left(N^{-\frac{1}{2}}\right).
			\end{align*}
			Choosing
			$$
			u(t):=\exp\left(-\frac{\pi t}{\sqrt{6N}}\right),
			$$
			we have
			$$
			I=\int_{T_1}^{T_2}G(t)\,dt=\frac{\sqrt{6N}}{\pi}\int_{u(T_2)}^{u(T_1)}\exp\left(\frac{\sqrt{6N}}{\pi}u\log u\right)\,du.
			$$
			Let
			$$
			\lambda=\frac{\sqrt{6N}}{\pi} \quad \text{ and } \quad F(u)=-u\log u.
			$$
			Since $F'(u)>0$ for $u\in[u(T_2),u(T_1)]$, the function $F(u)$ is increasing in this interval, and thus the integral dominates near the lower endpoint $u(T_{2})$. By the Laplace method for endpoint maxima \cite{lapinski2019multivariate}, we obtain 
				\begin{align}
					\notag
					I &= \frac{\sqrt{6N}}{\pi}\,
					\frac{\exp\left(-\lambda F(u(T_2))\right)}{\lambda F'(u(T_2))}\left(1+O\left(\frac{ F'(u(T_2))}{\lambda}\right)\right) \\
					\label{eq-(4.9)}
					&= \frac{2}{e\log N}\left(1+O\left(\frac{\log\log N}{\log N}\right)\right).
				\end{align}
			Now (\ref{eq-(4.9)}) gives an asymptotic formula for $\sum_{T_1<t\le T_2}$ in (\ref{eq-(4.8)}). Substituting this into (\ref{eq-(4.8)}), we obtain
			\begin{equation*}
				A_{1}\ge \frac{2p(N)^{2}}{e\log N}\left(1+O\left(\frac{1}{\sqrt{\log N}}\right)\right).  
			\end{equation*}
		\end{proof}
		Finally, we obtain our required lower bound for $Z(N)$ from (\ref{eq-(A)}), using Claim \ref{claim-1} and Claim \ref{claim-2} 
		$$
		Z(N) \ge \frac{2\, p(N)^{2}}{\log N} \left(1+O\left(\frac{1}{\sqrt{\log N}}\right)\right).
		$$
	\end{proof}
	\begin{remark}
		The contribution from $\sum_{t=1}^{T_{1}}c_{t}p_{t}(N-t)$ is very small, of order $O\left(\frac{p(N)^{2}}{(\log N)^{2}}\right)$, and hence does not improve our lower bound for $Z(N)$. 
	\end{remark}
	
	\begin{proof}[Proof of Theorem \ref{theorem-1.3}] 
		By the Murnaghan-Nakayama rule (Theorem \ref{theorem-2.1}) and Lemma \ref{lemma-2.2},
		\begin{equation*} \label{eq-(3)}
			Z_t(N) \geq c_t(N)\, p(N - t).  
		\end{equation*} 
		From (\ref{eq-hardy}), the partition function $p(N - t)$  is given by  
		\begin{equation*}\label{eq-(3.3)}
			p(N - t) = \frac{1}{4(N - t)\sqrt{3}} \exp\left( \frac{2\pi}{\sqrt{6}} \sqrt{N - t} \right) \left(1 + O\left((N - t)^{-1/2}\right)\right).  
		\end{equation*} 
		Combining these bounds with Proposition \ref{proposition-3.2}, we complete the proof of Theorem \ref{theorem-1.3}.  
	\end{proof}

	\bibliographystyle{abbrv}					
	\bibliography{report}  				
\end{document}